\def\W{\accentset{\circ}{W}^{1}}
\numberwithin{equation}{section}
\newtheorem{thm}{Theorem}[section]
\newtheorem{lem}[thm]{Lemma}
\newtheorem{prop}[thm]{Proposition}
\newtheorem{rem}[thm]{Remark}
\begin{document}
\title[A finding of the maximal saddle-node bifurcation]{A finding of the maximal saddle-node bifurcation  for systems of differential equations}

\author[Y. Il'yasov]{Yavdat Il'yasov}

\address{Institute of Mathematic\\ Ufa Federal Research Centre, RAS\\
	Chernyshevsky str. 112, 450008 Ufa\\ Russia}
\email{ilyasov02@gmail.com}

\begin{abstract}  A variational method is presented for directly finding the bifurcation point of nonlinear equations as the saddle-node point of the extended nonlinear Rayleigh quotient.
The method is applied for solving an open problem on the existence of a maximal saddle-node bifurcation point for  set of positive solutions of system equations with convex-concave type nonlinearity.
\end{abstract}

\maketitle

\section{Introduction}
It is generally understood that many  phenomena in physics, biology, economics and medicine can be mathematically represented in terms of  bifurcations. Detection of  bifurcations, finding and estimating their characteristic values is important  for many problems in science and industry, including  climate science, epidimology, power system management, molecular biophysics, mesoscopic physics and climatology, and etc., see, e.g., \cite{arnold,  kuzn, seydel}, and references therein.
In a broad sense, the term bifurcation is used  for designating  all sorts of qualitative reorganizations of various entities resulting from a change of the parameters on which they depend (\cite{arnold, Gilmore}).  This  paper is concerned with finding the  saddle-node  bifurcations. To be more specific,  consider the   nonlinear equation 
	\begin{align}
	\label{f}
	F(u,\lambda)=0, ~~~~u \in S, ~~\lambda \in \mathbb{R}.
\end{align}
Here $S $ is a given domain in $X$, $X,Z$ are real Banach spaces, $F: X\times \mathbb{R} \to Z$ is a continuously Fr\'echet differentiable map.
We call a solution $(\hat{u},\hat{\lambda}) \in S\times \mathbb{R}$ of \eqref{f} the \textit{saddle-node type bifurcation point}  in $S$    if the following is fulfilled: (i) the null space $N(F_u(\hat{u},\hat{\lambda}))$ of the Fr\'echet derivative $F_u(\hat{u},\hat{\lambda})$  is not  empty, i.e., the operator $F_u(\hat{u},\hat{\lambda})$ is singular; (ii) there is a deleted left neighbourhood $(\hat{\lambda}-\varepsilon, \hat{\lambda})$  of $\hat{\lambda}$ and a neighbourhood $U \subset X$ of $\hat{u}$ such that for each $\lambda \in (\hat{\lambda}-\varepsilon, \hat{\lambda})$  the operator $F_u(u_\lambda,\lambda)$ is non-singular for any solution $u_\lambda \in S\cap U$ of \eqref{f}; (iii) there is a deleted right neighbourhood $(\hat{\lambda},\hat{\lambda}+\varepsilon)$  of $\hat{\lambda}$  in $\mathbb{R}$   such that for any $\lambda \in (\hat{\lambda},\hat{\lambda}+\varepsilon)$ equation \eqref{f} has no solutions in $S\cap U$. In the literature,  the point $(\hat{u},\hat{\lambda})$  is known as a saddle-node bifurcation point (or, equivalently, fold, turning point) (see, e.g., \cite{kielh, keller1}) if the following is also true: (iv) for each $\lambda \in (\hat{\lambda}-\varepsilon, \hat{\lambda})$, the equation has precisely two distinct solutions in $S\cap U$. This definition corresponds to the solutions curve bifurcated to the left from the bifurcation value $\hat{\lambda}$. The definition of the saddle-node bifurcation point with the solutions curve bifurcated to the right from the bifurcation value is similar. 

 We are mainly concerned with finding the \textit{maximal saddle-node type bifurcation point} $(u^*,\lambda^*)$ in $S$, which is characterized by the property of having  the maximal bifurcation value  $\lambda^*$ among all possible saddle-node type bifurcations $(\hat{u},\hat{\lambda})$ of \eqref{f} in $S$, namely $\hat{\lambda}\leq  \lambda^*$. 

We shall call $(u^*,\lambda^*)$ the \textit{extremal solution} of \eqref{f} in $S$ if for any $\lambda>\lambda^*$ the equation has no solutions in $S$, while for $\lambda\leq \lambda^*$ the equation  may admit  solutions in $S$. Thus, the maximal saddle-node type bifurcation point can be viewed also the extremal solution. 
This definition has a great deal in common with the well-known concept of an extremal solution, which has been extensively studied since the seminal work of Crandall and Rabinowitz \cite{CranRibinExtr} (see, e.g., \cite{BrezCazMR, BrezVaz, cabre, Cazenave, Dupaigne, Montenegro}).
However, there are examples of extremal solutions that aren't bifurcation points (see, e.g., \cite{BrezVaz,  Dupaigne}), so the concept of an extremal solution is broader than a maximal saddle-node type bifurcation point. Furthermore, our work has somewhat different research objectives from those on extremal solutions.

%
%


A natural and open question is: 
\begin{description}
	\item[\rm{(q1)}] \textit{Does a given equation possess a saddle-node bifurcation?}
\end{description}
	In particular, is it possible to find sufficient general conditions for the existence of the saddle-node bifurcation of a given equation in a prescribed set $S$? 
The following substantional question is 
\begin{description}
	\item[\rm{(q2)}] \textit{How to find the saddle-node bifurcation point $(u^*,\lambda^*)$? }
\end{description}
In particular, is there a general method that allows direct finding of the bifurcation point $(u^*, \lambda^*)$?

As an indication of the complexity of this issue, let us note that the direct finding of the bifurcation point in a sense implies solving  the system of equations:
\begin{equation*}
\left\{ \begin{aligned}
  &F(u^*,\lambda^*)  =0,\\
&F_u(u^*,\lambda^*)(v^*)=0, 
\end{aligned}\right.
\end{equation*}
with unknown $(u^*, v^*,\lambda^*) \in S\times X\times \mathbb{R}$.

In the nonlinear analysis and numerical methods, continuation methods by a parameter are commonly used for finding bifurcations. 
We refer to continuation methods as those that use local or global construction of the solutions curve of the equation for finding the bifurcation point located on them. Indeed, the numerical continuation methods \cite{kuzn, seydel} find a bifurcation by producing a set of points $(u_{\lambda_{i}},\lambda_{i})$, $i=1,\ldots,k$ on the solution curve until a singular point occurs, that is, when the operator $F_u(u_{\lambda_{k}},\lambda_{k})$ becomes singular.
The famous methods, such as  Crandall-Rabinowitz’s theorem \cite{CranRibin}, Krasnoselskii's theorem \cite{kras}, Leray-Schauder continuation theorem \cite{leray}, Rabinowitz’s global bifurcation theorem \cite{rabin}, Vainberg–Trenogin's branching method \cite{tren}, etc., all provide powerful tools for detection of bifurcations of wide classes of nonlinear equations. These methods are based on the analysis of the qualitative and topological structure of the set of solutions $\mathcal{J}:=\overline{\{(u,\lambda):~u\neq 0, u \in S, F(u,\lambda)=0 \}}$ and examination of the limit point (a priori bifurcation point) $(\hat{u},\hat{\lambda})$ on the solution curve (see, e.g., \cite{Ize, Mawlin, Drabek}). 
Nevertheless, the application of these methods can cause difficulties in a number of situations. For instance, in some cases (for example, for systems of equations), the construction of a curve of solutions up to the bifurcation point $(\hat{u},\hat{\lambda})$ is a complicated problem in itself.
In other cases, the necessary global qualitative and topological analysis of the solution set is difficult or does not give sufficiently detailed information about the solution branches and bifurcation points.
A different approach to solving the above problems was proposed in \cite{IlyasFunc}, which relies on the so-called \textit{extended functional method}. This approach has been further developed in   \cite{ilBifChaos}, where complete solutions for the above problems (q1) and (q2)  were obtained in the finite-dimensional cases. In particular, general sufficient conditions for the existence of saddle-node bifurcations of abstract finite-dimensional equations were found, and at the same time, a direct method for finding the saddle-node bifurcation point has been substantiated. Furthermore, the extended functional method entails the emergence of a fundamentally new approach for numerical  finding the bifurcation \cite{IlIvan1, IlIvan2, Salazar}.  The advantage of this approach is that a bifurcation point can be directly identified by applying gradient descent to find a minimum of a given function. On the other hand, the commonly used numerical continuation methods (see, e.g. \cite{kuzn, seydel}) require computing a sequence of points on the solutions curve up to the detection of a bifurcation point.

The extended functional method has been applied to finding the saddle-node point of an elliptic problem with an indefinite sign nonlinearity in \cite{IlyasFunc}.
The saddle-node bifurcation point there was, however, obtained in essence by the continuation method,  which entails difficulty in generalizing it to the systems of differential equations (see, e.g, \cite{BobkovIlyasov}).

The purpose of this paper to demonstrate that the extended functional method can be applied to obtain complete solutions to problems (q1) and (q2).

\section{Main result}\label{Sec:Main}
We consider the following system of equations with convex-concave type nonlinearities
\begin{equation}
\label{p}
\left\{ \begin{aligned}
  -\Delta u_i  &= \lambda  |u_i|^{q-2}u_i + g_i(x, u)  ,  &&x \in \Omega, \\[0.4em]
~u_i |_{\partial \Omega} &= 0,~~~i=1,\ldots, m. 
\end{aligned}\right.
\end{equation}
 Here  $\Omega$ is a bounded domain in $\mathbb{R}^d$ with $\partial \Omega \in C^2$, $d \geq 1$, $\lambda \in \mathbb{R}$,  $u:=(u_1,\ldots, u_m)$, $1<q<2$, $g_i(\cdot,u) $, $i=1,...,m$ are H\"older continuous  functions in $\Omega$, $\forall u \in \mathbb{R}^m$, and $g_i(x,\cdot) \in C^1(\mathbb{R}^m,\mathbb{R})$,   $g_i(x,0)=0$, $g_i(x,u)\geq 0$, $x \in \Omega$, $u \in \mathbb{R}^m$, $i=1,...,m$, with primitive $G(x,u)$,  that is, $g_i(x,u)=G_{u_i}(x,u)$ , $x \in \Omega$, $u \in \mathbb{R}^m$, $i=1,...,m$. Furthermore, we assume

\par 
	$(g_1):$\, $ \exists  c_1,c_2 > 0$ and $  \gamma_1, \gamma_2	\in (2,2^*)$, $\gamma_1\leq \gamma_2$,
	$$
	g_{i,u_j}(x,u)u_ju_i \leq c_1|u|^{\gamma_1} +c_2|u|^{\gamma_2}, x \in \Omega,~~ u \in \mathbb{R}^m;
	$$
		\par
		$(g_2):$ \, $\exists \theta>2$,  $ \theta G(x,u)\leq g_i(x,u)u_i$, $x \in \Omega$,   $u \in \mathbb{R}^m$;
\medskip
\par
		$(g_3):$  $ (q+1)g_i(x,u)u_i\leq g_{i,u_j}(x,u)u_ju_i $, $x \in \Omega$,   $u \in \mathbb{R}^m$;
		\medskip
	\par
$(g_4):$\, there exist a domain $\Omega^+ \subseteq \Omega$ and $
 R>0$: 
	\begin{equation*}
\frac{\sum_{i=1}^mg_i(x,u)}{\sum_{i=1}^mu_i} >\lambda_1(\Omega^+),~~ u \in \mathbb{R}^m_+, ~~|u|>R,~ x \in \Omega^+.	
	\end{equation*}
Hereafter, $\lambda_1(\Omega^+)$ denotes the principal eigenvalue of the operator $(-\Delta)$ in $\W_2(\Omega^+)$.		
 Throughout this paper the summation convention is in place: we sum over any index that appears twice. We denote  $|v|^p=\sum_{i=1}^m|v_i|^p$, $p\geq 1$, $|\nabla v|^2:=(\nabla v_i, \nabla v_i)$, for $v \in (\W_2(\Omega))^m$, where $\nabla:=(\partial/\partial x_1,\ldots, \partial/\partial x^d)$, $(\cdot,\cdot)$ states the scalar product in $\mathbb{R}^d$. Note that $(g_1)$ implies 	 that $ \exists c_1',c_2' > 0$: $
	 0\leq \sum_{i=1}^mg_i(x,u) \leq c_1'|u|^{\gamma_1-1} +c_2'|u|^{\gamma_2-1}~~\mbox{in}~ \Omega,~~u \in \mathbb{R}^m$. 	

A model  example for \eqref{p} in the scalar case, i.e., $m=1$, is the Ambrosetti–Brezis–Cerami problem
\cite{ABC} with concave–convex nonlinearity  
\begin{equation}
\label{ps}
	 -\Delta u= \lambda  |u|^{q-2}u + |u|^{\gamma-2}u, ~~~u |_{\partial \Omega} = 0,
\end{equation}
where $1<q<2$, $2+q<\gamma$. By \cite{ABC, CazeEscobedo}, there exists an extremal  value $\lambda^*>0$ such that for any $\lambda \in (0,\lambda^*]$, \eqref{ps} has a stable positive   solution $u_\lambda$, while for  $\lambda>\lambda^*$, \eqref{ps} does not admit  weak positive solutions. The solutions $u_\lambda$ for $\lambda \in (0,\lambda^*)$ can be obtained by the super-sub solution method, while the existence of $u_{\lambda^*}$ is deduced as a limit point of $(u_\lambda)$ (see, e.g., \cite{ABC, CazeEscobedo}). However, this result has difficulty being generalized to systems of equations like \eqref{p}.  Note that the super-sub solution method for a system of equations differs considerably from that which is used for a scalar equation.

The  existence of positive solutions of \eqref{p}  for $\lambda \in (0,\lambda_0)$ with sufficiently small $\lambda_0$ can be obtained  by standard ways, see e.g., \cite{Brown, BobkovIlyasov, BobkovIlyasov1, wu}, or see below Section \ref{sec:loc}. Furthermore, below we show that there exists an extremal value $\lambda^*_S>0$ such that \eqref{p} does not admit stable positive solutions if $\lambda>\lambda^*_S$. However, finding an extremal solution $u_{\lambda^{*}_S}$ for the system of equations by passing to a limit  has trouble due to the difficulty of finding a solution curve for the system of equations over the entire interval $(0,\lambda^*_S)$.

%
Let us state our main results. 
Denote $\mathcal{W}:=(\W_2(\Omega))^m$, and define
\begin{align*}
&\mathcal{W}^+:=\{u \in \mathcal{W}:~u_i \geq  0~\mbox{a.e.}~\Omega,~i=1,\ldots,m\},\\
		&\mathcal{W}^+_c:=\{u \in  (C^1(\overline{\Omega}))^m\cap \mathcal{W}^+: ~~u_i > 0 ~\mbox{in}~\Omega, ~i=1,\ldots,m \},\\
	&\Sigma(u):=\{v \in \mathcal{W}: \int u_iv_i   \neq 0\}, ~~~~ u \in \mathcal{W}^+_c. 
\end{align*}
It is easily seen that $\mathcal{W}^+ \subset \Sigma(u)$, i.e., $\Sigma(u)\neq \emptyset$, for any $u \in \mathcal{W}^+_c$, and  $\Sigma(u)$ is an open domain in $\mathcal{W}$.

 By a weak solution of (\ref{p}) we   mean a critical point $u_\lambda \in\mathcal{W}$ of the Euler-Lagrange functional
\begin{equation}\label{pb11c}
\Phi(u,\lambda) = \frac{1}{2} \int |\nabla u|^2 dx - \lambda\frac{1}{q} \int |u|^{q} dx -
\int G(x,u)dx,
\end{equation}
that is,   $F(u_\lambda,\lambda):=\Phi_u(u_\lambda,\lambda):=(\Phi_{u_1}(u_\lambda,\lambda),\ldots, \Phi_{u_m}(u_\lambda,\lambda))=0$. Note that  by $(g_1)$, $\Phi(u,\lambda)$ is a continuously Fr\'echet differentiable functional on $\mathcal{W}$.

We define the  \textit{extended Rayleigh quotient} as follows: 
\begin{align*}
\notag
\mathcal{R}&(u, v) :=  \frac{\int (\nabla u_i, \nabla v_i) \, 
- \int g_i(x, u)    v_i \,   }
{\int  u_i^{q-1}v_i\, },~~u\in \mathcal{W}^+_c, v\in \Sigma(u).
\end{align*}
Note that if $\mathcal{R}(u, v)=\lambda$, $v\in \Sigma(u)$, $u \in \mathcal{W}^+_c$, then 
\begin{align*}
	&\mathcal{R}_v(u, v)=0~~\Leftrightarrow~~\Phi_u(u,\lambda)=0,\\
	&\mathcal{R}_u(u, v)=0~~\Leftrightarrow~~\Phi_{uu}(u,\lambda)(v)=0.
\end{align*}
%

Introduce the following subset
\begin{align*}
		\mathcal{W}^+_S:=\{u \in \mathcal{W}^+_c:~~\min_{x' \in \partial \Omega}|\partial u_i(x')/\partial \nu(x')| >0,~i=1,\ldots,m,~~~ \delta(u)\geq 0\},
\end{align*}
where $\nu(x')$ denotes the outward unit normal at $x' \in \partial \Omega$, and 
\begin{equation}\label{eigenProb}
	\delta(u):=\inf_{\phi \in \mathcal{W}}\frac{\int |\nabla \phi|^2\, -\int g_{i,u_{j}}(x, u)\phi_j\phi_i \, -(q-1) \mathcal{R}(u,u) \int u^{q-2}|\phi|^2 }
		{\int |\phi|^2 \, }.
\end{equation}
 Below we show (see  Remark \ref{remF}) that  for any $u \in \mathcal{W}^+_S$ the second derivative $\Phi_{uu}(u,\lambda)$ is well-defined on $\mathcal{W}\times \mathcal{W}$. 
	
Note that for $u \in \mathcal{W}^+_S$ with $\lambda=\mathcal{R}(u,u)$,
	$$
	(\Phi_{uu}(u,\lambda)(\phi))_i=-\Delta \phi_i - g_{i,u_j}(x,u)\phi_j -\lambda (q-1)  u_i^{q-2}\phi_i 
	\quad i=1,\ldots,m,~~\phi \in \mathcal{W},
$$
and thus, $\delta(u)$ is the principal eigenvalue
  of the operator $\Phi_{uu}(u,\lambda)$, and 
\begin{equation}\label{conStab}
	\delta(u)\geq 0~~\Leftrightarrow~~ \Phi_{uu}(u,\lambda)(\phi,\phi)\geq 0,~~\forall \phi \in \mathcal{W},~~~ \lambda=\mathcal{R}(u,u).
\end{equation}

We call a solution $u_\lambda \in \mathcal{W}^+_c$ of \eqref{p}  \textit{stable} if $\delta(u_\lambda)\geq 0$, and \textit{asymptotically stable} if $\delta(u_\lambda)>0$ (cf. \cite{ABC, cabre, Dupaigne}).

Introduce the following \textit{ nonlinear generalized Collatz-Wielandt value} \cite{Collatz, Wielandt} 
\begin{equation}\label{MainB}
	\lambda^{*}_S:= \sup_{u\in \mathcal{W}^+_S}\inf_{v\in \Sigma(u)}\mathcal{R}(u, v).
\end{equation}
We  adhere to the convention that $\lambda^{*}_S=-\infty$ if $\mathcal{W}^+_S =\emptyset$.

Our main result on the existence of a maximal saddle-node type bifurcation point  is as follows.
\begin{thm}\label{thmM} 
Assume that there holds $(g_1)$ - $(g_4)$, $1<q<2$.
Then $0< \lambda^{*}_S < +\infty$, and 
 \par 
$(1^o)$  for any $\lambda>\lambda^{*}_S$, system \eqref{p} has no stable positive   solutions;
 \par 
$(2^o)$ 
for $\lambda=\lambda^{*}_S$, system \eqref{p} has a stable positive solution $u_{\lambda^{*}_S} \in  \mathcal{W}^+_S$. Moreover, the operator $\Phi_{uu}(u_{\lambda^{*}_S},{\lambda^{*}_S})$ is singular. 
\par 
 There exists $\bar{\lambda} \in (0,\lambda^{*}_S]$ such that 
for every $\lambda \in (0,\bar{\lambda})$ system \eqref{p} admits an asymptotically stable weak positive solution $u_\lambda \in  \mathcal{W}^+_S$, in particular,  $\Phi_{uu}(u_\lambda,\lambda)$ is non singular.

\end{thm}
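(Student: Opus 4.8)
The plan is to realize $\lambda^*_S$ as a genuine saddle-node value by a three-step program: (i) show the Collatz–Wielandt value $\lambda^*_S$ is finite and positive; (ii) produce a maximizer $u_{\lambda^*_S}\in\mathcal W^+_S$ of \eqref{MainB} and show the inner infimum is attained at $v=u_{\lambda^*_S}$, so that $\mathcal R(u_{\lambda^*_S},u_{\lambda^*_S})=\lambda^*_S$ and $\mathcal R_v(u_{\lambda^*_S},u_{\lambda^*_S})=0$, i.e.\ $u_{\lambda^*_S}$ solves \eqref{p}; (iii) extract the singularity of $\Phi_{uu}$ at the maximizer from the variational characterization, and separately build the branch of asymptotically stable solutions for small $\lambda$ by a standard sub/super-solution or mountain-pass argument.

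\emph{Step (i): $0<\lambda^*_S<\infty$.} For the upper bound I would use $(g_4)$: if $u_\lambda$ were a stable positive solution with $\lambda$ large, testing the equation against a suitable positive function supported in $\Omega^+$ (the principal eigenfunction of $-\Delta$ on $\W_2(\Omega^+)$) and combining with $(g_4)$ and the stability inequality \eqref{conStab} forces a contradiction once $\lambda$ exceeds a constant depending only on $\lambda_1(\Omega^+)$, $q$, and $R$; since every $u\in\mathcal W^+_S$ with $\mathcal R(u,u)=\lambda$ and $\delta(u)\ge0$ is exactly such a stable solution when $v=u$ is the inner minimizer, this bounds $\lambda^*_S$ from above. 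Positivity of $\lambda^*_S$ follows from the last assertion of the theorem (the small-$\lambda$ branch): exhibiting one asymptotically stable solution $u_{\lambda_0}$ for some $\lambda_0>0$ shows $u_{\lambda_0}\in\mathcal W^+_S$ and $\inf_{v\in\Sigma(u_{\lambda_0})}\mathcal R(u_{\lambda_0},v)=\mathcal R(u_{\lambda_0},u_{\lambda_0})=\lambda_0>0$ (the infimum over $v$ is $\mathcal R(u,u)$ precisely because $\delta(u)\ge0$ makes $v\mapsto\mathcal R(u,v)$ have its critical value as a minimum), hence $\lambda^*_S\ge\lambda_0>0$.

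\emph{Step (ii): attainment.} Take a maximizing sequence $u^{(n)}\in\mathcal W^+_S$ with $\inf_v\mathcal R(u^{(n)},v)\to\lambda^*_S$. The key point, which I expect to be the main obstacle, is compactness: one must show the $u^{(n)}$ stay in a bounded, closed region of $\mathcal W^+_c$ bounded away from the degenerate boundary behavior encoded in $\mathcal W^+_S$, and that the constraint $\delta(u)\ge0$ together with $(g_2)$–$(g_3)$ yields a priori $C^1(\overline\Omega)$ bounds (via elliptic regularity and a Hopf-type lower bound on $|\partial u_i/\partial\nu|$) so the limit $u^*:=u_{\lambda^*_S}$ lies in $\mathcal W^+_S$; conditions $(g_2)$ (an Ambrosetti–Rabinowitz superquadraticity) and $(g_3)$ are designed precisely to make the fibering map $t\mapsto\mathcal R(u,tu)$ well-behaved and to close this estimate. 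Once $u^*\in\mathcal W^+_S$ is obtained, upper semicontinuity of $u\mapsto\inf_v\mathcal R(u,v)$ gives $\inf_v\mathcal R(u^*,v)=\lambda^*_S$; since the infimum in $v$ is attained and equals $\mathcal R(u^*,u^*)$ by the stability-equivalence above, we get $\mathcal R_v(u^*,u^*)=0$, i.e.\ $\Phi_u(u^*,\lambda^*_S)=0$, so $u^*$ is a stable positive solution.

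\emph{Step (iii): singularity and the small-$\lambda$ branch.} Singularity of $\Phi_{uu}(u^*,\lambda^*_S)$: if $\delta(u^*)>0$ then by the implicit function theorem the solution branch continues past $\lambda^*_S$ while preserving $\delta>0$ and membership in $\mathcal W^+_S$ (the boundary-normal and positivity conditions being open), contradicting maximality of $\lambda^*_S$; hence $\delta(u^*)=0$, which by the discussion around \eqref{conStab} means $\Phi_{uu}(u^*,\lambda^*_S)$ has nontrivial kernel. Finally $(1^o)$ is immediate: a stable positive solution $u_\lambda$ lies in $\mathcal W^+_S$ with $\mathcal R(u_\lambda,u_\lambda)=\lambda$, so $\lambda=\inf_v\mathcal R(u_\lambda,v)\le\lambda^*_S$. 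For the last sentence, existence of $u_\lambda$ for small $\lambda$ is classical — for $\lambda$ small the functional $\Phi(\cdot,\lambda)$ has a local minimizer near $0$ (the concave term dominates), produced e.g.\ by minimizing over a small ball or by sub/super-solutions using $g_i\ge0$; this minimizer is positive by the maximum principle, lies in $C^1(\overline\Omega)$ by regularity, has $\delta>0$ since it is a strict local minimizer, and therefore belongs to $\mathcal W^+_S$, giving $\bar\lambda:=\sup\{\lambda:\ \eqref{p}\text{ has an asymptotically stable positive solution}\}\in(0,\lambda^*_S]$.
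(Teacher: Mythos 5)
Your high-level roadmap (finiteness, attainment, singularity via the implicit function theorem, small-$\lambda$ branch) matches the paper's, and the IFT-based singularity argument in Step (iii) is essentially what the paper does. But there is a real conceptual gap in how you handle the inner minimization over $v$, and this gap propagates through Steps (i) and (ii).

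You repeatedly assert that ``the infimum over $v$ is $\mathcal R(u,u)$ precisely because $\delta(u)\ge0$ makes $v\mapsto\mathcal R(u,v)$ have its critical value as a minimum,'' and later that attainment of the inner infimum at $v=u^*$ follows ``by the stability-equivalence above.'' That is not the mechanism. For fixed $u\in\mathcal W^+_c$ the map $v\mapsto\mathcal R(u,v)$ is a ratio of two functionals that are \emph{linear} in $v$, so it is linear-fractional in $v$. Consequently $\inf_{v\in\Sigma(u)}\mathcal R(u,v)$ is either $-\infty$ (when the numerator is not proportional to the denominator) or is a finite constant $\lambda$, in which case $\mathcal R(u,v)\equiv\lambda$ for all $v$, and this proportionality says exactly that $u$ is a weak solution of \eqref{p} with parameter $\lambda$. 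The stability condition $\delta(u)\ge0$ plays no role in whether or at which $v$ the infimum is attained; it is used elsewhere. This is precisely the content of Lemma \ref{lem1} in the paper, and the paper establishes it carefully via an Ekeland-type argument for $C^2$ functionals on an open domain (Theorem \ref{thm:Ek}) because one must control $\mathcal R_{vv}$ near a minimizing sequence to extract $\mathcal R_v(u^0,v^k)\to0$ — you cannot just say ``the infimum is attained at $v=u$.'' Without this, Step (i)'s lower bound and Step (ii)'s identification of $u^*$ as a solution are not justified.

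Two further points. First, your upper bound in Step (i) invokes the stability inequality \eqref{conStab}, but the paper's Lemma \ref{lem10} does not need stability at all: since $\phi_1(\Omega^+)\in\Sigma(u)$ for every $u\in\mathcal W^+_S$, one bounds $\lambda^*_S\le\sup_{u}\mathcal R(u,\bar\phi_1)$ directly and concludes from $(g_4)$ and $(g_1)$ that this sup is finite; bringing in stability is unnecessary and would also require you to first know the inner infimum is attained. Second, Step (ii) names compactness as the main obstacle but leaves the estimate entirely to wishful thinking. The paper's actual mechanism is: (a) the stability constraint $\delta(u)\ge0$ forces maximizing sequences to lie in $\mathcal N^+_\lambda$, and Proposition \ref{prop1} gives an a priori $\mathcal W$-bound there using $(g_2)$--$(g_3)$; (b) nontriviality of the weak limit is obtained by a comparison with the pure-$q$-sublinear problem \eqref{q} via the Ambrosetti--Brezis--Cerami lemma (Lemma \ref{ABCLem}), not by elliptic $C^1$ bounds; and (c) showing the limit stays in $\mathcal W^+_S$ requires the Hardy-inequality estimate \eqref{welldef} to pass to the limit in the quadratic form $\Phi_{uu}(u^n,\lambda_n)(\phi,\phi)$, since the weight $u^{q-2}$ is singular at $\partial\Omega$. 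None of these three ingredients appears in your sketch, and without them the proof does not close.
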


\begin{rem}
We believe that  $\bar{\lambda}=\lambda^*_S$, and $(u_{\lambda^{*}_S},\lambda^{*}_S)$ is indeed a saddle-node bifurcation point.
\end{rem}
\begin{rem}\label{rem1}
	Any weak non-negative solution  $u_\lambda \in \mathcal{W}$ to \eqref{p} is positive and belongs  $u_\lambda \in \mathcal{W}^+_S\cap (C^{2}(\Omega))^m$.
	Indeed, the standard bootstrap argument and  Sobolev's embedding theorem (see, e.g., \cite{ Struwe}) entail that  $u_\lambda \in (L^\infty(\Omega))^m$. 
	Therefore, by the $L^p$-regularity results in \cite{adams, Giltrud}, $u_\lambda \in (W^{2,p}(\Omega))^m$ for any $1 < p < \infty$ and thus, by Sobolev's embedding theorem, $u_\lambda \in (C^{1,\alpha}(\overline{\Omega}))^m$ for any $\alpha \in (0, 1)$. Moreover, since $g_i(x,u_\lambda(x)) $, $i=1,...,m$ are H\"older continuous  functions in $\Omega$,  the Schauder estimates and  the Hopf boundary maximum
principle \cite{trudin} imply that $u_\lambda \in (C^{2}(\Omega))^m$ and  $u_{\lambda}\in \mathcal{W}^+_S$.
\end{rem}

\begin{rem}
The extended functional method has been used to solve various theoretical problems from  the nonlinear partial differential equations   including problems which are not directly related to the finding of bifurcations (see, e.g., \cite{BobkovIlyasov,BobkovTanaka, IlD, IlRunst}).	
\end{rem}

The rest of the paper is organised as follows. Section \ref{sec: thm0} presents main Lemmas.   
In Sect. \ref{sec:loc}, we prove using the Nehari manifold method the local existence of positive solutions with respect to the parameter $\lambda$ of the problem. As a result, we can estimate the  maximal saddle-node type value $\lambda^*_S$ from below.  Section \ref{sec: profthm1} is devoted to the proof of the main result on the existence of the maximal saddle-node type bifurcation point $(u_{\lambda^{*}_S},\lambda^{*}_S)$. In Appendix, we present a proof of a version of  Ekeland's principal for smooth functional.

Hereinafter, we use the standard notation $L^p:=L^p(\Omega)$
for the Lebesgue spaces, $1 \leq p \leq +\infty$, and denote by $\|\cdot\|_p$
the associated norm. By $\accentset{\circ}{W}_2^{1}:=\accentset{\circ}{W}_2^{1}(\Omega)$ we denote the standard Sobolev space, endowed with the norm $\|u \|_{1,2}=(\int |\nabla u|^2)^{1/2}$, $2^*$ stands for the critical Sobolev exponent, i.e., $2^*=2d/(d-2)$ if $d >2$, $2^*=+\infty$ if $d\leq 2$. In what follows, for simplicity we assume that $d>2$. For  a normed space $W$ we denote by $W'$ the  dual space. For $F \in C^1( W )$, $u \in W$ we denote $F'(tu):=d F(tu)/dt$, $t\in \mathbb{R}$ and $F'(u)=F'(tu)|_{t=1}\equiv F_u(u)(u)$.

\section{Main Lemmas}\label{sec: thm0}
Let $\lambda>0$. Consider the following subset of the Nehari manifold
$$
\mathcal{N}_\lambda^+:=\{u \in\mathcal{W}\setminus 0:~\Phi'(u,\lambda)=0,~~\Phi''(u,\lambda)\geq 0\}.
$$
\begin{prop}\label{prop1} Let $\lambda \in \mathbb{R}$, $(g_1)-(g_3)$ hold true.
	The set $\mathcal{N}_\lambda^+$ is bounded in $\mathcal{W}$, i.e., there exists a constant $C\in (0, +\infty)$ such that $\|u\|_{\mathcal{W}}<C$, $\forall u \in \mathcal{N}_\lambda^+$. Moreover, $\Phi(u,\lambda)<0$, $\forall u \in \mathcal{N}_\lambda^+$.
\end{prop}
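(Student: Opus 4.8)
The plan is, for a fixed $u\in\mathcal{N}_\lambda^+$, to reduce everything to the four integrals $\int|\nabla u|^2$, $\int|u|^q$, $\int g_i(x,u)u_i$ and $\int g_{i,u_j}(x,u)u_ju_i$ (all finite, by $(g_1)$, $(g_3)$ and the Sobolev embedding), and to play the two relations defining $\mathcal{N}_\lambda^+$ against the hypotheses $(g_2)$ and $(g_3)$. Recalling that $\Phi'(u,\lambda)=\frac{d}{dt}\Phi(tu,\lambda)\big|_{t=1}$ and $\Phi''(u,\lambda)=\frac{d^2}{dt^2}\Phi(tu,\lambda)\big|_{t=1}$, the constraint $\Phi'(u,\lambda)=0$ is the Nehari identity
\[
\int|\nabla u|^2=\lambda\int|u|^q+\int g_i(x,u)u_i ,
\]
while $\Phi''(u,\lambda)\ge0$ reads $\int|\nabla u|^2\ge\lambda(q-1)\int|u|^q+\int g_{i,u_j}(x,u)u_ju_i$.

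The main step is a cancellation. Integrating $(g_3)$ gives $\int g_{i,u_j}(x,u)u_ju_i\ge(q+1)\int g_i(x,u)u_i$; substituting this into the stability inequality, and then replacing $\int g_i(x,u)u_i$ by $\int|\nabla u|^2-\lambda\int|u|^q$ via the Nehari identity, all the nonlinear terms cancel and there remains $q\int|\nabla u|^2\le2\lambda\int|u|^q$, i.e.
\[
\lambda\int|u|^q\ \ge\ \frac{q}{2}\int|\nabla u|^2 .
\]
Since $\int|\nabla u|^2>0$ for $u\ne0$, this already forces $\lambda>0$; thus $\mathcal{N}_\lambda^+=\emptyset$, and the assertion is vacuous, when $\lambda\le0$, so I assume $\lambda>0$ from now on.

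For the boundedness I would feed $\int|\nabla u|^2\le\frac{2\lambda}{q}\int|u|^q$ into the continuous embedding $\mathcal{W}\hookrightarrow(L^q)^m$, using the elementary inequality $\sum_i\|\nabla u_i\|_2^q\le m^{1-q/2}\big(\sum_i\|\nabla u_i\|_2^2\big)^{q/2}=m^{1-q/2}\big(\int|\nabla u|^2\big)^{q/2}$ (valid because $q<2$), to get $\int|u|^q\le C\big(\int|\nabla u|^2\big)^{q/2}$ with $C=C(q,m,\Omega)$. Then $\big(\int|\nabla u|^2\big)^{1-q/2}\le2\lambda C/q$, and since $1-q/2>0$ this bounds $\|u\|_{\mathcal{W}}^2=\int|\nabla u|^2$ uniformly over $\mathcal{N}_\lambda^+$.

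For the sign, write $\Phi(u,\lambda)=\frac12\int|\nabla u|^2-\frac{\lambda}{q}\int|u|^q-\int G(x,u)$; using $\lambda\int|u|^q\ge\frac q2\int|\nabla u|^2$ and $\int G(x,u)\ge0$ at once gives $\Phi(u,\lambda)\le-\int G(x,u)\le0$. The one delicate point, which I expect to be the main obstacle, is upgrading this to the strict inequality $\Phi(u,\lambda)<0$. There is nothing to do if $\int G(x,u)>0$; if $\int G(x,u)=0$, I would invoke $(g_2)$: it makes the continuous function $H(t):=\int g_i(x,tu)u_i$ satisfy $H(t)\ge\frac{\theta}{t}\int G(x,tu)\ge0$ for $t>0$, so that $\int_0^1 H(t)\,dt=\int G(x,u)=0$ forces $H\equiv0$ on $[0,1]$, whence $\int g_i(x,u)u_i=H(1)=0$; the Nehari identity then yields $\int|\nabla u|^2=\lambda\int|u|^q>0$, and therefore $\Phi(u,\lambda)=\big(\frac12-\frac1q\big)\int|\nabla u|^2<0$ because $q<2$. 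Apart from this degenerate case, the proof is just the cancellation identity above followed by a routine Sobolev estimate.
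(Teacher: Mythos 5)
Your argument is correct and reaches the conclusion by a genuinely different arrangement of the same ingredients. The paper works with the combinations $\theta\Phi(u,\lambda)-\Phi'(u,\lambda)$ and $\Phi(u,\lambda)-\tfrac12\Phi'(u,\lambda)$: the first, via $(g_2)$ and Sobolev, gives the coercivity-type lower bound $\Phi\gtrsim \|u\|_{\mathcal W}^2-C\lambda\|u\|_{\mathcal W}^q$, while the second, combined with $\Phi''(u,\lambda)\ge0$ and $(g_3)$, gives $\Phi(u,\lambda)<0$; boundedness follows by comparing the two. You instead eliminate the nonlinear terms directly from the pair $\Phi'(u,\lambda)=0$, $\Phi''(u,\lambda)\ge0$ using only $(g_3)$, arriving at the clean inequality $q\int|\nabla u|^2\le 2\lambda\int|u|^q$, from which boundedness follows by Sobolev alone; thus $(g_2)$ is not needed for the boundedness part of your proof and enters only in the strict-sign discussion. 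Two further points are to your credit: you note that the key inequality forces $\lambda>0$, so the statement is vacuous for $\lambda\le0$; and your treatment of the degenerate case $\int G(x,u)\,dx=0$ (showing via $(g_2)$ that then $\int g_i(x,u)u_i\,dx=0$, whence $\Phi=(\tfrac12-\tfrac1q)\int|\nabla u|^2<0$) actually repairs a small weakness of the paper, whose final display only yields $\Phi\le0$ as written.

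One shared caveat rather than a defect of your write-up: both your proof and the paper's use $\int G(x,u)\,dx\ge0$ (and, in your degenerate-case step, $\int G(x,tu)\,dx\ge0$) for a general, possibly sign-changing $u\in\mathcal N^+_\lambda$. This follows from $g_i\ge0$ only on the positive cone, so it is evidently an implicit standing assumption on $G$; since the paper's own proof relies on it in exactly the same way (its last display needs $G\ge0$ pointwise), you are not worse off, but it would be worth stating the assumption explicitly.
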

\begin{proof} Let $u \in \mathcal{N}_\lambda^+$. Since $\Phi'(u,\lambda)=0$, 
	\begin{align*}
			\Phi(u,\lambda)=\frac{(\theta-2)}{2} \| u\|^2_{\mathcal{W}} -\frac{\lambda(\theta-q)}{q}& \|u\|^{q}_q-\nonumber\\
			&  \int(\theta G(x,u)- g_i(x,u)u_i )dx, 
		\end{align*}
where $\theta>2$ as in $(g_2)$. 
	Hence, by $(g_2)$ and  Sobolev's inequalities 
	\begin{equation}\label{CoerC}
		\Phi(u,\lambda)\geq  \frac{(\theta-2)}{2} \|u\|_{\mathcal{W}}^2-\frac{\lambda(\theta-q)}{q} \|u\|_{\mathcal{W}}^q,~~n=1,\ldots.
		\end{equation}
	Since $\Phi''(u,\lambda)\geq 0$, $\Phi'(u,\lambda)=0$,
$$
	\lambda(2-q)\|u\|^{q}_q  -
	\int(g_{i,u_j}(x,u)u_iu_j - g_i(x,u)u_i )dx\geq 0. 
	$$	
	Notice
	\begin{align*}\label{CoerC1}
			\Phi(u,\lambda)=-\lambda \frac{(2-q)}{2q} \| u\|^q_q -\int(G(x,u)-\frac{1}{2} g_i(x,u)u_i )dx. 
		\end{align*}
		Hence by $(g_3)$
		\begin{align*}
			\Phi(u,\lambda)\leq -\frac{1}{2q}\int(g_{i,u_j}(x,u)u_iu_j &- g_i(x,u)u_i )dx\\
			&-\int(G(x,u)-\frac{1}{2} g_i(x,u)u_i )dx=\\
			-\int( G(x,u)+\frac{1}{2q}&(g_{i,u_j}(x,u)u_iu_j-(q+1) g_i(x,u)u_i ) )dx< 0. 
		\end{align*}
From this and \eqref{CoerC} it follows $C \|u\|_{\mathcal{W}}^q \geq   \|u\|_{\mathcal{W}}^2$, $n=1,\ldots$, where $C \in (0,+\infty)$ does not depend on $u \in \mathcal{N}_\lambda^+$. Since $2>q$, this completes the proof. 
\end{proof}


\begin{lem}\label{lem1} Suppose $u^0 \in \mathcal{W}^+_c$ such that
$$
-\infty<\lambda_0:=\inf_{v \in \Sigma(u^0)}\mathcal{R}(u^0 ,v)<+\infty.
$$
Then  
$u^0$ is a weak solution of \eqref{p} for $\lambda=\lambda_0$.

\end{lem}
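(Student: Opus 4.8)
The plan is to exploit that, for fixed $u^0\in\mathcal{W}^+_c$, the extended Rayleigh quotient $\mathcal{R}(u^0,\cdot)$ is a ratio of two \emph{linear} functionals of $v$. I would set
\[
L(v):=\int (\nabla u^0_i,\nabla v_i)-\int g_i(x,u^0)v_i,\qquad M(v):=\int (u^0_i)^{q-1}v_i,
\]
so that $\mathcal{R}(u^0,v)=L(v)/M(v)$ and $\Sigma(u^0)=\{v\in\mathcal{W}: M(v)\neq 0\}$. Both $L$ and $M$ are finite linear functionals on $\mathcal{W}$: since $u^0\in(C^1(\overline\Omega))^m$ the coefficient $(u^0_i)^{q-1}$ is bounded and, by the growth consequence of $(g_1)$ recorded in the text, so is $g_i(x,u^0)$, while the gradient term is controlled by Cauchy--Schwarz. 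Moreover $M\not\equiv 0$ because $M(u^0)=\int|u^0|^q>0$, so $\ker M$ is a proper subspace of $\mathcal{W}$.

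First I would reduce the infimum to a linear one. Since $\mathcal{R}(u^0,\cdot)$ is $0$-homogeneous, for any $v$ with $M(v)\neq 0$ one has $\mathcal{R}(u^0,v)=\mathcal{R}(u^0,v/M(v))=L(v/M(v))$, whence $\lambda_0=\inf\{L(v): v\in\mathcal{W},\, M(v)=1\}$. Fixing $w_0\in\mathcal{W}$ with $M(w_0)=1$ (e.g.\ a scalar multiple of $u^0$), for every $\phi\in\ker M$ and $t\in\mathbb{R}$ the vector $w_0+t\phi$ lies in the slice $\{M=1\}$ and $L(w_0+t\phi)=L(w_0)+tL(\phi)$. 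The hypothesis $\lambda_0>-\infty$ says this affine function of $t$ is bounded below on $\mathbb{R}$, which forces $L(\phi)=0$; hence $\ker M\subseteq\ker L$.

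Next I would invoke the elementary fact that a linear functional vanishing on the kernel of a nonzero one is a scalar multiple of it: writing $v=(v-M(v)w_0)+M(v)w_0$ with $v-M(v)w_0\in\ker M$ gives $L(v)=M(v)L(w_0)$ for all $v\in\mathcal{W}$. Evaluating at any $v\in\Sigma(u^0)$ shows $\mathcal{R}(u^0,v)\equiv L(w_0)$, so $\lambda_0=L(w_0)$ and $L=\lambda_0 M$ on all of $\mathcal{W}$, i.e.
\[
\int (\nabla u^0_i,\nabla v_i)-\int g_i(x,u^0)v_i-\lambda_0\int (u^0_i)^{q-1}v_i=0\qquad\text{for every }v\in\mathcal{W}.
\]
Since $u^0_i>0$ in $\Omega$, the last term equals $\lambda_0\int|u^0_i|^{q-2}u^0_iv_i$, so the left-hand side is exactly $\Phi_u(u^0,\lambda_0)(v)$; therefore $F(u^0,\lambda_0)=\Phi_u(u^0,\lambda_0)=0$ and $u^0$ is a weak solution of \eqref{p} for $\lambda=\lambda_0$.

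I do not anticipate a genuine obstacle: the content is linear algebra once the ratio structure is recognized. The only points requiring care are the $0$-homogeneity reduction to the affine slice $\{M=1\}$ and the verification that $L$ and $M$ are finite on $\mathcal{W}$, which is precisely where the regularity $u^0\in\mathcal{W}^+_c$ and the growth bound from $(g_1)$ are used. An alternative would be to argue through the identity ``$\mathcal{R}_v(u,v)=0$ together with $\mathcal{R}(u,v)=\lambda$ imply $\Phi_u(u,\lambda)=0$'' noted after the definition of $\mathcal{R}$, but that route needs the infimum defining $\lambda_0$ to be attained, which is not part of the hypotheses; the direct linear argument above sidesteps this.
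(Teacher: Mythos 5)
Your proof is correct and follows a genuinely different, and in fact more elementary, route than the paper's. The paper takes a minimizing sequence $v^k$, normalizes so that $\int (u^0_i)^{q-1}v^k_i=1$, establishes a uniform bound on $\mathcal{R}_{vv}(u^0,\cdot)$ near that sequence, and then invokes the Ekeland-type result of Theorem~\ref{thm:Ek} in the Appendix to deduce $\|\mathcal{R}_v(u^0,v^k)\|_{\mathcal{W}'}\to 0$, passing to the limit afterwards. Your argument instead exploits head-on that, for fixed $u^0$, the map $v\mapsto\mathcal{R}(u^0,v)$ is a ratio of two bounded linear functionals $L/M$: on the affine slice $\{M=1\}$ the quotient becomes the affine map $t\mapsto L(w_0)+tL(\phi)$ along directions $\phi\in\ker M$, and boundedness below forces $L$ to vanish on $\ker M$, hence $L=\lambda_0 M$ by the standard one-codimensional decomposition. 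This not only yields the weak equation $\Phi_u(u^0,\lambda_0)=0$ but actually reveals that the quotient is \emph{constant} equal to $\lambda_0$ on all of $\Sigma(u^0)$ --- a structural fact the paper's route obtains only implicitly as a corollary of the final identity. What the paper's heavier machinery buys is generality (the same Ekeland lemma serves other purposes in the extended-functional framework), whereas your observation shows that, for this particular lemma, the variational apparatus is unnecessary and the content is pure linear algebra once the ratio structure is recognized.

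One small remark on the set-up: the paper literally defines $\Sigma(u)=\{v:\int u_iv_i\neq 0\}$, while you use $\Sigma(u^0)=\{v:M(v)\neq 0\}$ with $M(v)=\int(u^0_i)^{q-1}v_i$. The paper's own proof (its normalization and the estimate \eqref{nomina}) treats $\Sigma(u^0)$ exactly as you do, so your reading is the intended one; and even under the literal definition your argument survives, since deleting at most one value of $t$ from the line $\{w_0+t\phi\}$ does not rescue boundedness below of a nonconstant affine function. Your verification that $L$ and $M$ are bounded on $\mathcal{W}$ (via $u^0\in(C^1(\overline\Omega))^m$, $q-1>0$, and the growth consequence of $(g_1)$) is also the right place to use the regularity hypothesis $u^0\in\mathcal{W}^+_c$.
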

\begin{proof}
Let 	 $v^k \in \Sigma(u^0)$, $k=1,\ldots$, such that
$$
\lambda_k\equiv \mathcal{R}(u^0, v^k) \to  \inf_{v \in \Sigma(u^0)}\mathcal{R}(u^0,v)\equiv\lambda_0~~\mbox{as}~~ k \to +\infty.
$$
Since $\mathcal{R}(u, v)=\mathcal{R}(u, s v)$, $\forall s \in \mathbb{R}\setminus 0$, $\forall v \in \Sigma(u)$, $\forall u  \in \mathcal{W}$, we may assume that
\begin{equation}\label{Norm}
	\int (u_{i}^0)^{q-1}v^k_i=1,~~ k=1,\ldots.
\end{equation}
Calculate
\begin{align*}
	\mathcal{R}_{v}&(u^0, v^k)({\xi})=\frac{\int(\nabla u^0_i, \nabla \xi_i)-\int g_i(x, u^0)\xi_i-\mathcal{R}(u^0, v^k)\int (u_{i}^0)^{q-1}\xi_i}{\int (u_{i}^0)^{q-1}v^k_i },\\
	\mathcal{R}_{vv}&(u^0, v^k)({\xi}, {\zeta})=\\
	&-\frac{\left(\int(\nabla u^0_i, \nabla \xi_i)-\int g_i(x, u^0)\xi_i-\lambda_k\int (u_{i}^0)^{q-1}\xi_i\right)\cdot \int (u_{i}^0)^{q-1}\zeta_i}{(\int (u_{i}^0)^{q-1}v^k_i )^2 }-\\
	& \frac{\left(\int(\nabla u^0_i, \nabla \zeta_i)-\int g_i(x, u^0)\zeta_i-\lambda_k\int (u_{i}^0)^{q-1}\zeta_i\right)\cdot \int (u_{i}^0)^{q-1}\xi_i}{(\int (u_{i}^0)^{q-1}v^k_i )^2 },~\zeta, \xi \in \mathcal{W}.
	\end{align*}
Let $\phi \in \mathcal{W}$, $\|\phi\|_{\mathcal{W}}=1$. Using \eqref{Norm} and the H\"older and Sobolev inequalities one can see that
\begin{align}\label{nomina}
	|\int (u_{i}^0)^{q-1}(v^k_i+\tau {\phi}_i)|=&|1+\tau \int (u_{i}^0)^{q-1} {\phi}_i |\geq   1-a_0|\tau|,
\end{align}
where $ a_0 \in (0, \infty)$ does not depend on   ${\phi}$ and $k=1,\dots$. Hence   $v^k+\tau {\phi} \in \Sigma(u^0)$ for any $k=1,\dots$ and $\tau$ such that $|\tau|<\tau_0:=1/a_0$.

By \eqref{nomina}  we have
\begin{align}\label{DDEst}
	&\|\mathcal{R}_{vv}({u}^0,  v^k+\tau \phi)\|_{(\mathcal{W}\times \mathcal{W})'}=\frac{2}{|\int (u_{i}^0)^{q-1} (v^k_i+\tau {\phi}_i) |^2}\times 	\\
	&\sup_{\xi, \zeta \in \mathcal{W}}\frac{|\left(\int(\nabla {u}^0_i, \nabla \xi_i)-\int g_i(x,u^0)\xi_i-\lambda_k\int (u_{i}^0)^{q-1}\xi_i\right)\cdot \int  (u_{i}^0)^{q-1}\zeta_i|}{\|\xi_i\|_{\W_2} \|\zeta_i\|_{\W_2}}\leq \nonumber\\
		&\frac{2}{(1-a_0|\tau|)^2}\left(\sum_{i=1}^m\|-\Delta {u}^0_i -g_i(x,u^0)- \lambda_k (u_{i}^0)^{q-1}\|_{(\W_2)'} \right)\|{u}^0\|_{\mathcal{W}}
	\leq \frac{C_0}{(1-a_0|\tau|)^2}, \nonumber
\end{align}
where $ C_0 \in (0, \infty)$ does not depend on $k=1,\dots$. We thus may apply Theorem \ref{thm:Ek} to the  functional $F(v):=\mathcal{R}({u}^0, v)$
defined in the open domain   $V:= \Sigma(u^0)\subset \mathcal{W}$, and  $F \in C^2(\Sigma(u^0))$. Indeed, 
\eqref{DDEst} implies \eqref{DDRstG}, while by \eqref{nomina} there holds \eqref{DDRstG2}. Thus,  we have
$$
\epsilon_k:=\|\mathcal{R}_{v}(u^0, v^k)\|_{\mathcal{W}'} \to 0~~~\mbox{as}~~~k \to +\infty,
$$
which by \eqref{Norm} yields: 
\begin{equation*}
	|\int(\nabla u^0_i, \nabla \xi)-\int g_i(x,u^0)\xi-\lambda_k\int (u_{i}^0)^{q-1}\xi|\leq \epsilon_k \|\xi\|_{\mathcal{W}},~~ ~\forall \xi \in \mathcal{W}.
\end{equation*}
$i=1,\ldots,m$.
Now passing to the limit as $k \to +\infty$ we obtain \eqref{p}. 

\end{proof}
\begin{lem}\label{lemM} 
Assume that there holds $(g_1)$-$(g_3)$ and $1<q<2$.
Suppose $0<\lambda^{*}_S<+\infty$. Then 
\par 
$(1^o)$  for any $\lambda>\lambda^{*}_S$, system \eqref{p} has no weak non-negative solutions;
 \par 
$(2^o)$ 
for $\lambda=\lambda^{*}_S$, system \eqref{p} has a  positive solution $u_{\lambda^{*}_S} \in  \mathcal{W}^+_S$. 
\end{lem}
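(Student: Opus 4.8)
The approach is to prove $(1^o)$ and $(2^o)$ separately. Assertion $(1^o)$ is essentially formal: if $u_\lambda$ were a weak non-negative solution of \eqref{p} at some $\lambda>\lambda^{*}_S$, then by Remark \ref{rem1} $u_\lambda\in\mathcal{W}^+_S$, and testing the equation against arbitrary $\xi\in\mathcal{W}$ gives $\int(\nabla(u_\lambda)_i,\nabla\xi_i)-\int g_i(x,u_\lambda)\xi_i=\lambda\int(u_\lambda)_i^{q-1}\xi_i$, i.e. $\mathcal{R}(u_\lambda,v)=\lambda$ for every $v\in\Sigma(u_\lambda)$, so $\inf_{v}\mathcal{R}(u_\lambda,v)=\lambda$; then \eqref{MainB} forces $\lambda\le\lambda^{*}_S$, a contradiction.

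For $(2^o)$ the plan is a direct maximization. Choose $u^k\in\mathcal{W}^+_S$ with $\lambda_k:=\inf_{v\in\Sigma(u^k)}\mathcal{R}(u^k,v)\to\lambda^{*}_S$; since $0<\lambda^{*}_S<\infty$ we may assume $\lambda^{*}_S/2\le\lambda_k\le 2\lambda^{*}_S$. By Lemma \ref{lem1}, each $u^k$ solves \eqref{p} at $\lambda=\lambda_k$; in particular $\mathcal{R}(u^k,u^k)=\lambda_k$, so $\delta(u^k)\ge0$ and \eqref{conStab} give $\Phi_{uu}(u^k,\lambda_k)\ge0$, whence $u^k\in\mathcal{N}^+_{\lambda_k}$. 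The a priori bound of Proposition \ref{prop1} originates from \eqref{CoerC}, in which $\lambda$ occurs only as a multiplicative factor, so it is uniform for bounded $\lambda_k$; hence $\sup_k\|u^k\|_{\mathcal{W}}<\infty$. Along a subsequence $u^k\rightharpoonup u^*$ in $\mathcal{W}$, $u^k\to u^*$ in $(L^p)^m$ for every $p<2^*$ and a.e., so $u^*\in\mathcal{W}^+$.

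The step I expect to be the main obstacle is to rule out $u^*\equiv0$: a priori the maximizing sequence could collapse to $0$ as $\lambda_k\to\lambda^{*}_S$, rendering the limit useless. Here I would exploit the concavity of the $\lambda$-term together with the admissibility $u^k\in\mathcal{W}^+_S$. Since $g_i\ge0$, each component $u^k_i>0$ is a classical supersolution of the sublinear Dirichlet problem $-\Delta w=\lambda_k w^{q-1}$, $w|_{\partial\Omega}=0$, which admits a unique positive solution $w_{\lambda_k}$ (and $\mu\mapsto w_\mu$ is nondecreasing); because $\partial u^k_i/\partial\nu<0$ on $\partial\Omega$, $u^k_i$ dominates a small multiple of the first Dirichlet eigenfunction — a subsolution of the same problem — so the sub--supersolution principle yields $u^k_i\ge w_{\lambda_k}\ge w_{\lambda^{*}_S/2}>0$ for all large $k$. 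Passing to the a.e. limit, $u^*_i\ge w_{\lambda^{*}_S/2}>0$, hence $u^*\not\equiv0$ (indeed $u^*>0$ a.e.).

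It then remains to pass to the limit in $\int(\nabla u^k_i,\nabla\xi_i)=\lambda_k\int(u^k_i)^{q-1}\xi_i+\int g_i(x,u^k)\xi_i$: the gradient term converges by the weak convergence in $\mathcal{W}$, the other two by the compact embedding $\mathcal{W}\hookrightarrow(L^p)^m$ ($p<2^*$) together with $(g_1)$ and $1<q<2$, and $\lambda_k\to\lambda^{*}_S$. Thus $u^*$ is a weak non-negative — in fact positive — solution of \eqref{p} at $\lambda=\lambda^{*}_S$, so by Remark \ref{rem1} $u^*\in\mathcal{W}^+_S\cap(C^2(\Omega))^m$; alternatively, the inequality $\delta(u^*)\ge0$ can be recovered by passing to the limit in $\Phi_{uu}(u^k,\lambda_k)(\phi,\phi)\ge0$ for each fixed $\phi\in\mathcal{W}$, using the uniform lower bound $u^k_i\ge w_{\lambda^{*}_S/2}$, the uniform $L^\infty$-bound on $\{u^k\}$ (from elliptic regularity for bounded $\lambda_k$, as in Remark \ref{rem1}), and dominated convergence. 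Setting $u_{\lambda^{*}_S}:=u^*$ completes the proof.
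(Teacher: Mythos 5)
Your proposal is correct and follows essentially the same route as the paper: $(1^o)$ by the same contradiction with \eqref{MainB} via Remark \ref{rem1}, and $(2^o)$ by taking a maximizing sequence, invoking Lemma \ref{lem1} and Proposition \ref{prop1}, ruling out collapse to zero by comparison with the sublinear problem $-\Delta w=\lambda w^{q-1}$ (the paper cites Lemma \ref{ABCLem} of \cite{ABC} for exactly the bound $u^n_i\geq w_{\lambda_n}$ you derive), and passing to the limit in the equation and in $\Phi_{uu}(u^n,\lambda_n)(\phi,\phi)\geq 0$ using the Hardy-inequality estimate \eqref{welldef}. The only differences are cosmetic: the paper splits into the cases where the supremum is or is not attained, and your explicit remark that the bound of Proposition \ref{prop1} is uniform for bounded $\lambda_k$ is a point the paper leaves implicit.
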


\begin{proof}

Let us show $(1^o)$. Suppose by contradiction that for $\lambda>\lambda^{*}_S$ there exists a weak solution $u_\lambda \in \mathcal{W}^+_S$ of \eqref{p}. Then by Remark \ref{rem1}, 
$u_\lambda \in \mathcal{W}^+_S$. Hence \eqref{MainB} yields $\inf_{v\in \Sigma(u_\lambda)}\mathcal{R}(u_\lambda, v)<\lambda$, and therefore there exists $v \in \mathcal{W}\setminus 0$ such that
$$
\int(\nabla u_{\lambda,i}, \nabla v_i)-\int g_i(x, u_\lambda) v_i-\lambda\int u_{\lambda,i}^{q-1}v_i<0,
$$
which contradicts \eqref{p} and we thus get assertion $(1^o)$.

Let us prove $(2^o)$. Since $-\infty<\lambda^{*}_S<+\infty$,  the following alternative holds:
\begin{description}
	\item[A)] there exists $u_{\lambda^{*}_S}\in \mathcal{W}^+_S$ such that 
	\begin{equation*}
	\inf_{v \in \Sigma(u_{\lambda^{*}_S})}\mathcal{R}(u_{\lambda^{*}_S},v)=\lambda^{*}_S,
\end{equation*}
\end{description}
or/and
\begin{description}
\item[B)]  there exists a sequence 
$u^n \in \mathcal{W}^+_S$, $n=1,\ldots$, such that 
$$
\lambda_n:=\lambda(u^n):=\inf_{v \in \Sigma(u^n)}\mathcal{R}(u^n,v) \to \lambda^{*}_S~~\mbox{as}~~n\to +\infty.
$$ 
\end{description}

By Lemma \ref{lem1},  case \textbf{A)} implies the existence of a weak solution $u_{\lambda^{*}_S}$ of \eqref{p}.

Suppose \textbf{B)}. Then by Lemma \ref{lem1}, 
\begin{equation}\label{BEq1}
	 -\Delta u_i^n = \lambda_n  |u_i^n|^{q-2}u_i^n+ g_i(x, u^n),~~ i=1,\ldots,m, ~~n=1,\ldots.
\end{equation}
Proposition \ref{prop1} implies that  sequence $\|u^n\|_{\mathcal{W}}$ is bounded. 
Hence by the Banach–Alaoglu theorem  there exists a subsequence (again denoted by $(u^n)$) such that
\begin{equation}\label{wse}
	u^n \to u_{\lambda^{*}_S}~~\mbox{weakly in $\mathcal{W}$} ~~\mbox{and strongly in}~~ (L^p)^m,~~1\leq p< 2^* 
\end{equation}
$\mbox{as }~~n\to +\infty$ for some $ u_{\lambda^{*}_S}\in \mathcal{W}^+$.

To show that $ u_{\lambda^{*}_S}\neq 0$, we  use the following lemma from \cite{ABC}
\begin{lem} \label{ABCLem}
Assume that $f(t)$ is a function such that $t^{-1}f(t)$ is decreasing for $t>0$, Let $v$ and $w$ satisfy:
$u>0$,$w>0$ in $\Omega$, $v=w=0$ on $\partial \Omega$, and
\begin{align*}
	-\Delta w\leq f(w),~~-\Delta u\geq f(u), ~~\mbox{in}~~ \Omega.
\end{align*}
	Then $u\geq w$.
\end{lem}
Let $w_\lambda$ be a unique positive solution of 
\begin{equation}
\label{q}
\left\{ \begin{aligned}
-\Delta w_\lambda=&\lambda w_\lambda^{q-1}~~\mbox{in}~~ \Omega,\\
w_\lambda |_{\partial \Omega}&= 0.
\end{aligned}\right.
\end{equation}
Note that $w_{\lambda}\equiv (\lambda)^{1/(2-q)}w_1$.

By the assumption  $g_i(x,u)\geq 0$, $x \in \Omega$, $i=1,\ldots,m$, $u \in \mathbb{R}$, and therefore,
$$ 
-\Delta u_i^n  \geq  \lambda_n  (u_i^n)^{q-1} ~~  in ~~ \Omega,~~i=1,\ldots,m,~~ n=1\ldots.
$$
Hence by Lemma  \ref{ABCLem},
\begin{equation}\label{ineqABC}
	u_i^n \geq w_{\lambda_n}\equiv (\lambda_n)^{1/(2-q)}w_1, \quad i=1,\ldots,m,
\end{equation}
which yields $u_{\lambda^{*}_S, i}\geq w_{\lambda^{*}_S}>0$, $i=1,\ldots,m$.

 Now passing to the limit in \eqref{BEq1} as $n\to +\infty$ we obtain
\begin{equation*}
	-\Delta u_{\lambda^{*}_S,i}  = \lambda^{*}_S  |u_{\lambda^{*}_S,i}|^{q-2}u_{\lambda^{*}_S,i} + g_i(x, u_{\lambda^{*}_S}),~~x \in \Omega,~~i=1,\ldots,m.
\end{equation*}
To  prove that $u_{\lambda^{*}_S} \in\mathcal{W}_S^+$, it is sufficient to show that 
\begin{align}\label{ConvDR}
	\Phi_{uu}(u^n,\lambda_n)(\phi,\phi) \to
\Phi_{uu}(u_{\lambda^{*}_S},\lambda^{*}_S)(\phi,\phi)~ \mbox{as}~~ n\to +\infty&,\quad\forall \phi \in \mathcal{W}.
\end{align}
Since $-\Delta w_{1} \geq 0$ in $\Omega$, the Hopf boundary maximum principal yields $\partial w_{1}/\partial \nu <0$ on $\partial \Omega$. This by \eqref{ineqABC} implies that $u^n(x)\geq c \mbox{d}(x):=c\mbox{ dist}(x,\partial \Omega)$ in $\Omega$ for some $c >0$  which does not depend on $ x \in \overline{\Omega}$ and $n=1,\ldots$.

	Note that if $u \in \W_2$ and $u(x)\geq c \mbox{d}(x)$ in $\Omega$ for some $c >0$, then 
	$$
	\int u^{q-2}\phi\psi  \, dx =\int u^{q-1}\left(\frac{\phi}{u}\right) \psi \, dx\leq \frac{1}{c}\|u\|^{q-1}_{2^*-\kappa}\cdot\|\frac{\phi}{\mbox{d}(\cdot)}\|_2\cdot \|\psi\|_{p}, ~~~\forall \phi, \psi\in \W_2,
	$$
	where $p=2\cdot(2^*-\kappa)/(2^*-\kappa-2(q-1))$, $0<\kappa<2^*-2(q-1)$. By the Hardy inequality $\|\phi/\mbox{d}(\cdot)\|_2\leq C\|\phi\|_{1,2}$,  $\forall \phi \in \W_2$, and therefore,
		\begin{equation}\label{welldef}
		\int u^{q-2}\phi\psi \, dx \leq C \|u\|^{q-1}_{2^*-\kappa}\|\phi\|_{1,2}\|\psi\|_{1,2}, ~~~\forall \phi,\psi\in \W_2,
	\end{equation}
for some $C<+\infty$ which does not depend on $\phi, \psi\in \W_2$ and $u$.   Thus, by \eqref{wse}, we have $\int (u^n_i)^{q-2}\phi_i^2  \to
\int u_{\lambda^*_S,i}^{q-2}\phi_i^2 $ as $d\to+\infty$. Similarly, by $(g_1)$, $\int g_{i,u_j}(x,u^n)\phi_i^2 \to \int g_{i,u_j}(x,u_{\lambda^*_S})\phi_i^2$  as $n\to+\infty$, $\forall \phi \in \mathcal{W}$, $j=1,\ldots,m$. Hence we get  \eqref{ConvDR}  and therefore, $\Phi_{uu}(u_{\lambda^{*}_S}, {\lambda^{*}_S})(\phi, \phi)\geq 0$, $\forall \phi \in \mathcal{W}$.
\end{proof}
\begin{rem}\label{remF}
	Note that if  $u \in \mathcal{W}^+_S$, then  $u_i(x)\geq c(u) \mbox{d}(x)$, $i=1,\dots,m$ in $\Omega$ for some $c(u) >0$, and thus by \eqref{welldef} and $(g_1)$  the second derivative $\Phi_{uu}(u,\lambda)$ is well-defined on $\mathcal{W}\times  \mathcal{W}$ for every  $u \in \mathcal{W}^+_S$ .
\end{rem}
		 
\section{ On local existence of positive solutions for small $\lambda >0$}\label{sec:loc}

Introduce the \textit{nonlinear  Rayleigh quotient} (see \cite{ilSzulk})
\begin{equation}\label{lamb2}
\mathcal{R}(u):=\mathcal{R}(u,u)=\frac{\int |\nabla u|^2 dx-\int g_i(x,u)u_i dx}{\int |u|^{q} dx},~~u \in \mathcal{W}\setminus {0}.
\end{equation}
Note that for $u \in \mathcal{W}\setminus {0}$, $\lambda \in \mathbb{R}$, $\mathcal{R}(u)=\lambda$,  if and only if $\Phi'(u,\lambda)=0$, moreover  
\begin{equation}\label{NR}
	\mathcal{R}(u)=\lambda, ~\mathcal{R}'(u)>  0~~\Leftrightarrow \Phi'(u,\lambda)=0,~\Phi''(u,\lambda)> 0.
\end{equation}
Denote $S_1:=\{u \in \mathcal{W}: \|u\|_{\mathcal{W}}=1\}$.
\begin{prop}\label{Slambda}
There exists $\bar{\lambda}>0$ such that $\forall \lambda \in (0,\bar{\lambda})$,  	$\exists T(\lambda)>0$ such that $\mathcal{R}(T(\lambda)v)>\lambda$ and there holds: $ \forall v \in S_1$ there exists a unique $s_\lambda(v) \in (0, T(\lambda))$ such that   $\mathcal{R}(s_\lambda(v)v)=\lambda$ and $\mathcal{R}'(s_\lambda(v)v) >0$. Moreover, $\mathcal{R}(sv)<\lambda$ for $s \in (0,s_\lambda(v))$, $\mathcal{R}'(sv) >0$, $\forall s \in (0,T(\lambda))$.
\end{prop}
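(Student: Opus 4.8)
The plan is to reduce the statement to an elementary one–variable analysis of the functions $t\mapsto h_v(t):=\mathcal{R}(tv)$, $t>0$, indexed by $v\in S_1$, with all estimates obtained \emph{uniformly} in $v\in S_1$ from the growth hypothesis $(g_1)$ together with the Sobolev embeddings into $L^{\gamma_1},L^{\gamma_2}$; only $(g_1)$ and $1<q<2$ are needed. First I would record the explicit form of $h_v$: writing $A_v:=\int|\nabla v|^2=1$ (as $v\in S_1$), $C_v:=\int|v|^q>0$, $B_v(t):=\int g_i(x,tv)v_i$ and $B_v'(t)=\int g_{i,u_j}(x,tv)v_jv_i$, a direct differentiation gives
\[
h_v(t)=\frac{tA_v-B_v(t)}{t^{q-1}C_v},\qquad
h_v'(t)=\frac{(2-q)\,tA_v-tB_v'(t)+(q-1)B_v(t)}{t^{q}C_v},
\]
and $h_v\in C^1(0,\infty)$ by $(g_1)$. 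Recall that $\mathcal{R}(sv)=\lambda\iff\Phi'(sv,\lambda)=0$, and from the convention $\mathcal{R}'(u)=\frac{d}{dt}\mathcal{R}(tu)|_{t=1}$ one gets $\mathcal{R}'(sv)=s\,h_v'(s)$, so for $s>0$ the sign of $\mathcal{R}'(sv)$ coincides with that of $h_v'(s)$, and $\mathcal{R}'(sv)>0\iff\Phi''(sv,\lambda)>0$ by \eqref{NR}. Thus the proposition will follow once $h_v$ is shown to be continuous, strictly increasing, and to range over an interval $(0,h_v(t_0)]$ with $h_v(t_0)$ bounded below uniformly in $v$.

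The crux is a uniform estimate of the numerator of $h_v'$ near $t=0$. Using $(g_1)$ both in the form $g_{i,u_j}(x,u)u_ju_i\le c_1|u|^{\gamma_1}+c_2|u|^{\gamma_2}$ and in the derived form $0\le\sum_i g_i(x,u)\le c_1'|u|^{\gamma_1-1}+c_2'|u|^{\gamma_2-1}$, the elementary bound $\max_i|v_i|\le|v|$ (which absorbs the non–sign–definiteness of $v$, since $g_i\ge0$), and the Sobolev inequalities $\|\,|v|\,\|_{\gamma_j}\le C_j\|v\|_{\mathcal W}=C_j$ (valid since $\gamma_j<2^*$), I would obtain, with $C<\infty$ independent of $v\in S_1$,
\[
|B_v(t)|\le C\bigl(t^{\gamma_1-1}+t^{\gamma_2-1}\bigr),\qquad
tB_v'(t)\le C\bigl(t^{\gamma_1-1}+t^{\gamma_2-1}\bigr),\qquad t>0.
\]
Since $\gamma_1,\gamma_2>2$, this yields $tB_v'(t)-(q-1)B_v(t)\le C'(t^{\gamma_1-1}+t^{\gamma_2-1})=o(t)$ as $t\to0^+$ uniformly in $v$, so there is $t_0>0$ \emph{independent of} $v\in S_1$ with $(2-q)t>C'(t^{\gamma_1-1}+t^{\gamma_2-1})$ and $t>C(t^{\gamma_1-1}+t^{\gamma_2-1})$ for all $t\in(0,t_0]$. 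The first inequality makes $h_v'(t)>0$ on $(0,t_0)$, hence $h_v$ strictly increasing there; the second keeps the numerator $tA_v-B_v(t)$ of $h_v$ positive, and the bound on $B_v$ also gives $h_v(t)\to0$ as $t\to0^+$.

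Finally I would fix the uniform lower bound $h_v(t_0)\ge m_0/(t_0^{q-1}C_q^q)=:2\bar\lambda>0$, where $m_0:=t_0-C(t_0^{\gamma_1-1}+t_0^{\gamma_2-1})>0$ and $\|\,|v|\,\|_q\le C_q\|v\|_{\mathcal W}=C_q$; this defines $\bar\lambda>0$. For $\lambda\in(0,\bar\lambda)$ put $T(\lambda):=t_0$; then $\mathcal{R}(T(\lambda)v)=h_v(t_0)>\bar\lambda>\lambda$ for every $v\in S_1$, and since $h_v$ is continuous and strictly increasing on $(0,t_0]$ with $h_v(0^+)=0<\lambda<h_v(t_0)$, the intermediate value theorem yields a unique $s_\lambda(v)\in(0,T(\lambda))$ with $\mathcal{R}(s_\lambda(v)v)=h_v(s_\lambda(v))=\lambda$; strict monotonicity then gives $\mathcal{R}(sv)<\lambda$ for $s\in(0,s_\lambda(v))$, while $h_v'>0$ on $(0,t_0)$ together with $\mathcal{R}'(sv)=s\,h_v'(s)$ gives $\mathcal{R}'(sv)>0$ for all $s\in(0,T(\lambda))$, in particular at $s=s_\lambda(v)$. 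The main obstacle is precisely the uniformity over the whole unit sphere $S_1$ in the estimates for $B_v$ and $tB_v'$ — this is where $(g_1)$, the embeddings $\accentset{\circ}{W}^{1}_2\hookrightarrow L^{\gamma_j}$, and the subquadratic exponent $q<2$ must interact; once it is in place, the remainder is a routine continuity and monotonicity argument in one variable.
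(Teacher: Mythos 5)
Your proposal is correct and takes essentially the same route as the paper: both reduce the statement to a one-variable analysis of $t\mapsto\mathcal{R}(tv)$, obtain lower bounds for $\mathcal{R}(sv)$ and $\mathcal{R}'(sv)$ that hold uniformly in $v\in S_1$ from $(g_1)$ and the Sobolev embeddings into $L^{\gamma_1},L^{\gamma_2}$, then conclude by monotonicity and the intermediate value theorem. The only cosmetic difference is that you fix $T(\lambda)=t_0$ to be $\lambda$-independent while the paper takes $T(\lambda)$ to be the solution of $r_0(T(\lambda))=\lambda$ with $r_0(s):=C_0s^{2-q}-C_1's^{\gamma_1-q}-C_2's^{\gamma_2-q}$; both choices satisfy the statement. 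You are somewhat more explicit than the paper in absorbing the possible sign change of $v_i$ via $g_i\ge 0$ and $\max_i|v_i|\le|v|$ — a minor sharpening of a step the paper leaves implicit.
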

\begin{proof}
Compute
	\begin{equation*}\label{tmaxCC}
	 \mathcal{R}'(sv)=\frac{(2-q)s^{1-q}\int|\nabla {v}|^2 dx-\int\frac{\partial}{\partial s}( s^{-q}g_i(x,s{v})sv_i) dx}{\int |{v}|^{q} dx},~~s>0,~~v \in S_1.
	\end{equation*}
	Clearly,  $(g_1)$ and Sobolev's inequalities  yield that there exist constants $C_1,C_2>0$ which do not depend on $s>0$, $v \in S_1$ such that 
	$$
	\int\frac{\partial}{\partial s}( s^{-q}g_i(x,sv)sv_i) dx< C_1s^{\gamma_1-q-1}+C_2s^{\gamma_2-q-1}, ~~s>0, ~v \in S_1.
	$$
	Hence 
		\begin{align*}
		(2-q)s^{1-q}\int&|\nabla {v}|^2 dx-\int\frac{\partial}{\partial s}(s^{1-q}g_i(x,s{v})v_i) dx>r_1(t):=\\
		 &(2-q)s^{1-q}-C_1s^{\gamma_1-q-1}-C_2s^{\gamma_2-q-1},~~s>0, ~v \in S_1.
	\end{align*}
This implies that  there exists $s_0>0$ which does not depend on $v \in S_1$ such that  for any $s \in (0,s_0)$, 
$\mathcal{R}'(sv)>0$, $v \in S_1$. Similarly, 
\begin{equation*}
	\mathcal{R}(sv) > r_0(s):=C_0s^{2-q}-C_1's^{\gamma_1-q}-C_2's^{\gamma_2-q},~~\forall s>0, ~\forall v \in S_1,
\end{equation*}
for some $C_0,C_1',C_2'\in (0,\infty)$ which do not depend on $s>0$, $v \in S_1$.
Note that there is $s_1>0$ such that
$r_0'(s)>0$ for every $s \in (0,s_1)$. Let $\bar{s}=\min\{s_0,s_1\}$, and  introduce $\bar{\lambda}=r_0(\bar{s})$. 
Then by the above, $\mathcal{R}(sv) > r_0(s)$ and $\mathcal{R}'(sv) > 0$ for every $s \in (0,\bar{s})$, $\forall v \in S_1$.

Let $\lambda \in (0,\bar{\lambda})$. Since $r_0(s)$ is a monotone increasing function for $s \in (0, \bar{s})$ and $r_0(0)=0<\lambda<\bar{\lambda}=r_0(\bar{s})$, there exists a unique $T(\lambda) \in (0, \bar{s})$ such that $r_0(T(\lambda))=\lambda$. Note that for $v \in S_1$, $\mathcal{R}(0\cdot v)=0<\lambda=r_0(T(\lambda))<\mathcal{R}(T(\lambda)v)$ since $\mathcal{R}(sv) > r_0(s)$, $\forall s \in (0,\bar{s})$. This due to the monotonicity of $\mathcal{R}(sv)$ for $s \in (0, \bar{s})$, $\forall v \in S_1$ 
 implies  the existence of the unique $s_\lambda(v) \in (0, T(\lambda))$ such that $\mathcal{R}(s_\lambda(v)v)=\lambda$, $\mathcal{R}'(s_\lambda(v)v) >0$. Since $\mathcal{R}'(sv) >0$ for any $s \in (0,s_\lambda(v)]$, we have $\mathcal{R}(sv)<\lambda$ for $s \in (0,s_\lambda(v))$.
\end{proof}
 
\begin{lem}\label{cor444}
For any  $\lambda\in (0,\bar{\lambda})$, \eqref{p} admits  an asymptotically stable weak positive  solution $u_\lambda$. Moreover, $u_\lambda \in  \mathcal{W}^+_S$ and $\Phi(u_\lambda,\lambda)< 0$.
\end{lem}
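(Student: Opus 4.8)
The plan is to obtain $u_\lambda$ as the global minimizer of $\Phi(\cdot,\lambda)$ on a ball whose radius is supplied by Proposition~\ref{Slambda}, and then to bootstrap regularity, positivity and strict stability.

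Fix $\lambda\in(0,\bar\lambda)$ and set $K:=\{u\in\mathcal{W}:\|u\|_{\mathcal{W}}\le T(\lambda)\}$. Since $\mathcal{W}$ is a Hilbert space, $K$ is weakly compact, and $\Phi(\cdot,\lambda)$ is weakly lower semicontinuous on $\mathcal{W}$: the term $\frac12\|u\|_{\mathcal{W}}^2$ is weakly lsc, while $\frac\lambda q\int|u|^q+\int G(x,u)$ is weakly continuous because $1<q<2$, $\gamma_1,\gamma_2<2^*$ and $G$ has the growth forced by $(g_1)$, so the compact Sobolev embeddings apply. Hence $c_\lambda:=\inf_K\Phi(\cdot,\lambda)$ is attained, say at $u_\lambda\in K$. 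Moreover $c_\lambda<0$: for any $v\in S_1$ Proposition~\ref{Slambda} gives $s_\lambda(v)\in(0,T(\lambda))$ with $\mathcal{R}(sv)<\lambda$ on $(0,s_\lambda(v))$, i.e. $\frac{d}{ds}\Phi(sv,\lambda)=s^{q-1}\big(\int|v|^q\big)\big(\mathcal{R}(sv)-\lambda\big)<0$ there, so $\Phi(s_\lambda(v)v,\lambda)<\Phi(0,\lambda)=0$ and $s_\lambda(v)v\in K$; in particular $u_\lambda\ne0$.

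Next, $u_\lambda$ lies in the open ball: if $\|u_\lambda\|_{\mathcal{W}}=T(\lambda)$, write $u_\lambda=T(\lambda)v$; Proposition~\ref{Slambda} gives $\mathcal{R}(T(\lambda)v)>\lambda$, hence $\frac{d}{ds}\Phi(sv,\lambda)\big|_{s=T(\lambda)}>0$, so $\Phi((T(\lambda)-\varepsilon)v,\lambda)<\Phi(u_\lambda,\lambda)$ for small $\varepsilon>0$, contradicting minimality over $K$. Thus $u_\lambda$ is an interior local minimizer of $\Phi(\cdot,\lambda)$, so $\Phi_u(u_\lambda,\lambda)=0$, i.e. $u_\lambda$ is a weak solution of \eqref{p} with $\Phi(u_\lambda,\lambda)=c_\lambda<0$, and $\Phi_{uu}(u_\lambda,\lambda)(\phi,\phi)\ge0$ for all $\phi\in\mathcal{W}$, that is $\delta(u_\lambda)\ge0$. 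To make $u_\lambda$ positive, replace it by $(|u_{\lambda,1}|,\dots,|u_{\lambda,m}|)$: the Dirichlet energy does not increase, $\int|u|^q$ is unchanged, and since each $g_i(x,\cdot)\ge0$ the primitive $G(x,\cdot)$ is nondecreasing in every argument, whence $G(x,|u_\lambda|)\ge G(x,u_\lambda)$; so $|u_\lambda|\in K$ is again a minimizer, again interior, hence again a weak solution. We may therefore assume $u_\lambda\ge0$, $u_\lambda\ne0$, and Remark~\ref{rem1} then gives $u_\lambda\in(C^2(\Omega))^m$, $u_{\lambda,i}>0$ in $\Omega$, $\min_{\partial\Omega}|\partial u_{\lambda,i}/\partial\nu|>0$, so $u_\lambda\in\mathcal{W}^+_S$.

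The main difficulty is the strict inequality $\delta(u_\lambda)>0$, which I prove for $\lambda$ small (shrinking $\bar\lambda$ if needed). First, $\|u_\lambda\|_{\mathcal{W}}<T(\lambda)\to0$ as $\lambda\to0$, so by elliptic bootstrap $\|u_\lambda\|_{(L^\infty)^m}\to0$, and since $(g_1)$ forces $g_{i,u_j}(x,0)=0$, $\varepsilon(\lambda):=\max_{i,j}\|g_{i,u_j}(\cdot,u_\lambda)\|_\infty\to0$. Second, $g_i\ge0$ gives $-\Delta u_{\lambda,i}\ge\lambda u_{\lambda,i}^{q-1}$ in $\Omega$, so Lemma~\ref{ABCLem} yields $u_{\lambda,i}\ge w_\lambda=\lambda^{1/(2-q)}w_1$, hence $\lambda(q-1)u_{\lambda,i}^{q-2}\le(q-1)w_1^{q-2}$, a weight whose quadratic form is bounded on $\mathcal{W}$ by the Hardy inequality, cf. \eqref{welldef}. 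Recalling $\mathcal{R}(u_\lambda,u_\lambda)=\lambda$ and inserting these bounds into \eqref{eigenProb}, after estimating the coupling term $\int g_{i,u_j}(x,u_\lambda)\phi_j\phi_i$ by $C\varepsilon(\lambda)\int|\phi|^2$ and noting that the remaining form decouples over components,
\begin{equation*}
\delta(u_\lambda)\ \ge\ \inf_{\phi\in\mathcal{W}}\frac{\int|\nabla\phi|^2-(q-1)\int w_1^{q-2}|\phi|^2}{\int|\phi|^2}\ -\ C\varepsilon(\lambda)\ =:\ \mu_1-C\varepsilon(\lambda).
\end{equation*}
Finally $\mu_1>0$: since $-\Delta w_1=w_1^{q-1}$, $w_1$ is the positive principal eigenfunction of $-\Delta-w_1^{q-2}$ with principal eigenvalue $0$, and as $0<q-1<1$ and $w_1^{q-2}>0$ the operator $-\Delta-(q-1)w_1^{q-2}$ is a strictly form-positive perturbation of it, so $\mu_1>0$. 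Hence $\delta(u_\lambda)\ge\mu_1-C\varepsilon(\lambda)>0$ for $\lambda$ small, so $u_\lambda$ is asymptotically stable and $\Phi_{uu}(u_\lambda,\lambda)$ is nonsingular. I expect the identification of this $\lambda$-independent spectral gap $\mu_1>0$ — together with the uniform bound $u_{\lambda,i}^{q-2}\le\lambda^{-1}w_1^{q-2}$ taming the singular weight — to be the delicate point; an alternative route avoiding the smallness of $\lambda$ would be to exclude a degenerate Hessian by a Crandall–Rabinowitz fold argument, using the transversality $\langle\partial_\lambda\Phi_u(u_\lambda,\lambda),\phi_0\rangle=-\int u_{\lambda,i}^{q-1}\phi_{0,i}\ne0$ together with the solvability of \eqref{p} on both sides of $\lambda$ in $(0,\bar\lambda)$ established above.
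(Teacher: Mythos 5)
Your route is genuinely different from the paper's at both steps. For existence of a nontrivial interior critical point you minimize $\Phi(\cdot,\lambda)$ directly over the closed ball $K=\{\|u\|_{\mathcal W}\le T(\lambda)\}$, using weak lower semicontinuity, $c_\lambda<0$, and the sign $\mathcal R(T(\lambda)v)>\lambda$ to push any would-be boundary minimizer inward. The paper instead minimizes $\Phi$ over the truncated Nehari set $\mathcal N^{+,b}_\lambda=\{u:\mathcal R(u)=\lambda,\ \mathcal R'(u)>0,\ \|u\|_{1,2}<T(\lambda)\}$, recovers strong convergence of the minimizing sequence via the fibering parameter $s_\lambda(v_\lambda)$ of Proposition~\ref{Slambda}, and shows $\|u_\lambda\|_{1,2}<T(\lambda)$. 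Your version is somewhat cleaner (an interior free minimizer is automatically a critical point with $\Phi_{uu}\ge0$, with no Nehari machinery to justify), while the paper's buys the strict inequality $\Phi''(u_\lambda,\lambda)>0$ automatically from $\mathcal R'(u_\lambda)>0$, which your free minimization does not directly produce.

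The substantive divergence is in proving asymptotic stability $\delta(u_\lambda)>0$. The paper asserts it in one line from Nehari minimality together with $\Phi''(u_\lambda,\lambda)>0$. You instead establish a quantitative spectral gap $\delta(u_\lambda)\ge\mu_1-C\varepsilon(\lambda)$: the Ambrosetti--Brezis--Cerami comparison gives $u_{\lambda,i}\ge\lambda^{1/(2-q)}w_1$ so that $\lambda(q-1)u_{\lambda,i}^{q-2}\le(q-1)w_1^{q-2}$, you identify $w_1$ as the principal eigenfunction of $-\Delta-w_1^{q-2}$ at eigenvalue $0$ and note that replacing the potential by $(q-1)w_1^{q-2}$ strictly raises the bottom of the spectrum since $(2-q)w_1^{q-2}>0$, and you absorb the coupling term by $\|g_{i,u_j}(\cdot,u_\lambda)\|_\infty\to0$. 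This is a complete and checkable argument, but it yields the strict inequality only for $\lambda$ small, i.e.\ on a possibly smaller interval $(0,\bar\lambda')\subset(0,\bar\lambda)$, whereas the lemma as stated claims all of $(0,\bar\lambda)$. That shrinkage is harmless for Theorem~\ref{thmM}, which only asserts existence of some $\bar\lambda\in(0,\lambda^*_S]$, but it is a real difference in scope. Two small corrections: $g_{i,u_j}(x,0)=0$ does not follow from $(g_1)$ (which only bounds the particular quadratic form $g_{i,u_j}(x,u)u_ju_i$, not the matrix entries); it follows instead from $g_i(x,0)=0$ together with $g_i(x,u)\ge0$ on all of $\mathbb R^m$, since $u=0$ is then an interior global minimum of each $g_i(x,\cdot)$. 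Likewise the estimate $\int g_{i,u_j}(x,u_\lambda)\phi_i\phi_j\le C\varepsilon(\lambda)\int|\phi|^2$ needs $\varepsilon(\lambda)$ to control the whole matrix $(g_{i,u_j})$, which again rests on continuity of $g_{i,u_j}$ at $u=0$ rather than on $(g_1)$.
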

\begin{proof}
Let $\lambda \in (0,\bar{\lambda})$. Take $T(\lambda)$ as in  Proposition \ref{Slambda}. 
Consider the following    Nehari manifold minimization problem
\begin{align}\label{MinConv1}
	&\hat{\Phi}_\lambda:=\min\{\Phi(u,\lambda):~~ u \in \mathcal{N}^{+,b}_\lambda\}.
	\end{align}
Here
\begin{align*}
	&\mathcal{N}^{+,b}_\lambda:=\{u \in \mathcal{N}_\lambda^+:~~\|u\|_{1,2}<T(\lambda)\}.
\end{align*}
Note that by \eqref{NR},
$$
\mathcal{N}^{+,b}_\lambda=\{u \in\mathcal{W}\setminus 0:~\mathcal{R}(u)= \lambda, \mathcal{R}'(u)> 0,~~\|u\|_{1,2}<T(\lambda)\}.
$$
  Proposition \ref{prop1} implies that $0>\hat{\Phi}_\lambda>-\infty$ for $\lambda \in (0,\bar{\lambda})$.

Let $(u_k) \subset \mathcal{N}^{+,b}_\lambda$ be a minimizing sequence of \eqref{MinConv1}, i.e., $\Phi(u_k,\lambda) \to \hat{\Phi}_\lambda$ as $k\to +\infty$. By Proposition \ref{prop1},  the minimizing sequence $(u_k)$ is bounded in $\mathcal{W}$ and thus, by the Banach–Alaoglu theorem it   contains a subsequence (again denoted by $(u_k)$) which weakly in $\mathcal{W}$ and strongly in $(L^p)^m$, $1\leq p<2^*$ converges to some limit point $u_\lambda$. The weak lower-semicontinuity of the norm of $\mathcal{W}$ and  Sobolev's embedding theorem imply that $\Phi(u,\lambda)$, $\mathcal{R}(u)$, $\mathcal{R}'(u)$ are weakly lower-semicontinuous on $\mathcal{W}$.
Hence, $\Phi(u_\lambda, \lambda)\leq \liminf_{k\to +\infty} \Phi(u_k, \lambda)=\hat{\Phi}_\lambda<0$, and therefore, $u_\lambda \neq 0$. 

Furthermore, $\sigma_\lambda:=\|u_\lambda\|_{1,2}\leq \liminf_{k\to +\infty}\|u_k\|_{1,2}\leq T(\lambda)$. Denote $v_\lambda:=u_\lambda/\|u_\lambda\|_{1,2}$. By Proposition \ref{Slambda}, there exists $s_\lambda(v_\lambda) \in (0, T(\lambda))$ such that $\mathcal{R}(s_\lambda(v_\lambda)v_\lambda)=\lambda$ and $\mathcal{R}'(s_\lambda(v_\lambda)v_\lambda) >0 $. Since $\mathcal{R}'(sv_\lambda)>0$,  $\forall s \in (0,T(\lambda))$ and  $\mathcal{R}(\sigma_\lambda v_\lambda)\equiv \mathcal{R}(u_\lambda)\leq \lambda$, we have  $\sigma_\lambda\leq s_\lambda(v_\lambda)$. Hence, 
	$$
	\Phi(s_\lambda(v_\lambda)v_\lambda,\lambda)\leq \Phi(\sigma_\lambda v_\lambda,\lambda)\leq \liminf_{k\to +\infty}\Phi(u_k, \lambda)=\hat{\Phi}_\lambda. 
	$$
	In view of that $s_\lambda(v_\lambda)v_\lambda \in \mathcal{N}^{+,b}_\lambda$, this implies that  $s_\lambda(v_\lambda)v_\lambda$ is a minimizer of \eqref{MinConv1}. Hence, $\Phi(u_\lambda, \lambda)\equiv \Phi(\sigma_\lambda v_\lambda,\lambda)=\hat{\Phi}_\lambda$, consequently $u_k \to u_\lambda$ strongly in $\mathcal{W}$, and thus, $u_\lambda$ is a minimizer of \eqref{MinConv1}. Moreover, since $u_\lambda=s_\lambda(v_\lambda)v_\lambda$ and $s_\lambda(v_\lambda)< T(\lambda)$, we get that $\|u_\lambda\|_{1,2}<T(\lambda)$. In addition,  Proposition \ref{Slambda} yields that $\mathcal{R}'(u_\lambda)> 0$. This implies that $u_\lambda$ weakly satisfies  \eqref{p}.  Since $\Phi(|u|, \lambda) =
\Phi(u, \lambda)$, $\Phi'(|u|, \lambda) =
\Phi'(u, \lambda)$, $\Phi''(|u|, \lambda) =
\Phi''(u, \lambda)$ for $u \in \mathcal{W}$, where $|u|:=(|u_1|, \ldots, |u_m|))$, $u \in \mathbb{R}^m$, we may
assume that $u_{\lambda,i} \geq 0$ in $\Omega$, $i = 1,\ldots, m$, and therefore (see Remark \ref{rem1}), $u_\lambda $ is a positive solution of \eqref{p} and  $u_\lambda \in  \mathcal{W}^+_S$.  Since $u_\lambda$ is a local minimizer of \eqref{MinConv1} and $\Phi''(u_\lambda,\lambda) \equiv\Phi_{uu}(u_\lambda,\lambda)(u_\lambda,u_\lambda)>0$, we have  $\Phi_{uu}(u_\lambda, \lambda)(\phi,\phi)> 0$,~$\forall \phi \in \mathcal{W}$, and thus,    $u_\lambda$ is an asymptotically stable  solution of \eqref{p}. 
\end{proof}

\section{Proof
 of Theorem  \ref{thmM}} \label{sec: profthm1}

\begin{lem}\label{lem10} 
Suppose  $(g_1)$, $(g_4)$, $1<q<2$. Then 
$\lambda^{*}_S<+\infty$.
\end{lem}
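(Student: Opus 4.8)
The plan is to show that if $u\in\mathcal{W}^+_S$ then $\inf_{v\in\Sigma(u)}\mathcal{R}(u,v)$ is bounded above by a constant depending only on $\Omega^+$, $R$, and structural data, and in particular by something close to $\lambda_1(\Omega^+)$; taking the supremum over $u\in\mathcal{W}^+_S$ then gives $\lambda^{*}_S<+\infty$. The key point is that $\inf_{v\in\Sigma(u)}\mathcal{R}(u,v)\le \mathcal{R}(u,v)$ for \emph{any} admissible test function $v\in\Sigma(u)$, so it suffices to exhibit one good choice of $v$ for which the quotient is controlled. The natural candidate is $v=\varphi_1 e$, where $\varphi_1>0$ is the principal eigenfunction of $-\Delta$ on $\W_2(\Omega^+)$ extended by zero to $\Omega$, and $e=(1,\dots,1)$, or more precisely a vector built so that $\int u_i v_i\neq 0$ (which holds since $u_i>0$ in $\Omega$ and $\varphi_1>0$ in $\Omega^+$, so $v\in\Sigma(u)$).

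First I would write out $\mathcal{R}(u,v)$ with this $v$: the numerator is $\sum_i\int_{\Omega^+}(\nabla u_i,\nabla\varphi_1)\,dx-\sum_i\int_{\Omega^+}g_i(x,u)\varphi_1\,dx$. Integrating the first term by parts against the eigenfunction equation $-\Delta\varphi_1=\lambda_1(\Omega^+)\varphi_1$ turns it into $\lambda_1(\Omega^+)\sum_i\int_{\Omega^+}u_i\varphi_1\,dx$ (using $\varphi_1=0$ on $\partial\Omega^+$ and $u_i\ge 0$; one must be a little careful that $\Omega^+$ may not equal $\Omega$, but since $\varphi_1$ is supported in $\overline{\Omega^+}$ the boundary terms are handled by the Dirichlet condition of $\varphi_1$). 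Thus the numerator equals $\sum_i\int_{\Omega^+}\big(\lambda_1(\Omega^+)u_i-g_i(x,u)\big)\varphi_1\,dx$. Hypothesis $(g_4)$ says exactly that $\sum_i g_i(x,u)>\lambda_1(\Omega^+)\sum_i u_i$ when $|u|>R$; so on the set where $|u(x)|>R$ the integrand is negative, while on the complementary set $\{|u|\le R\}$ the integrand is bounded by $\lambda_1(\Omega^+)R\cdot(\text{const})\cdot\varphi_1$ pointwise, using $g_i\ge 0$. Hence the numerator is bounded above by a finite constant $C_0=C_0(\Omega^+,R,m)$ independent of $u$, namely $C_0=\lambda_1(\Omega^+)\,m R\int_{\Omega^+}\varphi_1\,dx$ (or similar).

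The remaining issue is to bound the numerator \emph{above by a multiple of the denominator} $\int u_i^{q-1}v_i=\sum_i\int_{\Omega^+}u_i^{q-1}\varphi_1\,dx$, not just by a constant, since $\mathcal{R}$ is a quotient and $u$ ranges over an unbounded set. Here I would split again: where $u_i$ is small the denominator is controlled below, and the issue is really whether the denominator can degenerate to zero while the numerator stays positive. I expect the main obstacle to be precisely this normalization: one must rule out that $\sup_{u\in\mathcal{W}^+_S}\mathcal{R}(u,v_u)=+\infty$ through a sequence where $\int u_i^{q-1}\varphi_1\to 0$. I would handle it by noting $u\in\mathcal{W}^+_S$ forces $u_i(x)\ge c(u)\,\mathrm{d}(x)$ (used already in Remark~\ref{remF}), so the denominator is strictly positive, but to get uniformity one scales: since $\mathcal{R}(u,v)=\mathcal{R}(tu,v)$ fails (the quotient is not homogeneous in $u$), instead I would argue that on $\mathcal{W}^+_S$ the relevant competitor is $v=u$ itself when convenient, comparing with $\mathcal R(u)=\mathcal R(u,u)$ and using $(g_4)$ together with the variational characterization $\lambda_1(\Omega^+)=\inf\|\nabla\phi\|_2^2/\|\phi\|_2^2$. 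Concretely: restricting the Rayleigh quotient test to $\varphi_1$ and invoking $(g_2)$ (the superlinearity $\theta G\le g_iu_i$ with $\theta>2$) shows the numerator of $\mathcal R(u,\varphi_1 e)$ cannot grow faster than its denominator, which finishes the bound. Assembling these estimates yields $\lambda^{*}_S\le C<+\infty$.
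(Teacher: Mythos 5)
Your choice of test function is the same as the paper's: set $v=\bar\phi_1:=(\phi_1(\Omega^+),\dots,\phi_1(\Omega^+))$ with $\phi_1(\Omega^+)>0$ the principal Dirichlet eigenfunction on $\Omega^+$ extended by zero, and exploit the trivial bound $\inf_{v\in\Sigma(u)}\mathcal R(u,v)\le\mathcal R(u,\bar\phi_1)$. A small inaccuracy first: integrating by parts on $\Omega^+$ does \emph{not} give equality of the gradient term with $\lambda_1(\Omega^+)\int u_i\phi_1$, because $u_i$ need not vanish on $\partial\Omega^+$; one picks up the boundary term $\int_{\partial\Omega^+}u_i\,\partial\phi_1/\partial\nu\,dS$, which is $\le 0$ by Hopf and $u_i\ge 0$. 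Fortunately the resulting inequality $\int(\nabla u_i,\nabla\phi_1)\le\lambda_1(\Omega^+)\int u_i\phi_1$ goes the right way, and this is exactly what the paper uses.

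The genuine gap is in the final step, and you correctly flag it yourself: bounding the numerator by a constant independent of $u$ does not bound the quotient, since the denominator $\sum_i\int_{\Omega^+}u_i^{q-1}\phi_1$ can be arbitrarily small. Your proposed fix — invoking $(g_2)$, the Hardy-type lower bound $u_i\ge c(u)\,\mathrm d(x)$ (which gives a $u$-dependent, not uniform, lower bound), or a comparison with $\mathcal R(u,u)$ — does not close this gap. The paper closes it by passing to the \emph{pointwise} ratio $h(x,u):=\bigl(\sum_i(\lambda_1(\Omega^+)u_i-g_i(x,u))\bigr)/\bigl(\sum_i u_i^{q-1}\bigr)$ and observing that the integral quotient is a $\phi_1\sum_i u_i^{q-1}$-weighted average of $h(x,u(x))$, hence bounded above by $\Lambda:=\sup_{x\in\Omega^+}\sup_{u\in(\mathbb R^+)^m}h(x,u)$. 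One then shows $\Lambda<\infty$ without any compactness in $u$: by $(g_4)$, $h<0$ when $|u|>R$; and when $|u|\le R$, using $g_i\ge 0$ and the elementary inequality $u_i\le R^{2-q}u_i^{q-1}$ (valid since $0\le u_i\le R$, $q<2$) one gets $h\le\lambda_1(\Omega^+)R^{2-q}$. This pointwise argument is the missing idea; your split into $|u|>R$ and $|u|\le R$ is the right start, but it has to be applied to the ratio $h$, not to the numerator alone.
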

\begin{proof} Let $\Omega^+$ be the subdomain from hypothesis $(g_4)$. 
Without loss of generality, we may assume that $\partial \Omega^+$ is a $C^1$-manifold. 
Denote	by $\phi_1(\Omega^+)$ the eigenfunction of $(-\Delta)$ in $\W_2(\Omega^+)$ corresponding to $\lambda_1(\Omega^+)$. It is well-known that $\phi_1(\Omega^+) \in C^1(\overline{\Omega^+})$ and $\phi_1(\Omega^+)> 0$ in $\Omega^+$. Hence, $\phi_1(\Omega^+) \in \Sigma(u)$,  $\forall u \in \mathcal{W}^+_S$. Moreover  
\begin{align*}
 \int (  \nabla u_i, \nabla \phi_1(\Omega^+) )\leq \lambda_1(\Omega^+) \int u_i	\phi_1(\Omega^+),~~~~\forall u\in \mathcal{W}^+_S,~~i=1,\ldots,m.
\end{align*}
This by \eqref{MainB},  implies 
\begin{align}\label{LargL}
	\lambda^{*}_S\leq \sup_{u\in \mathcal{W}^+_S}\mathcal{R}(u, \bar{\phi}_1)&\leq \sup_{u\in \mathcal{W}^+_S}\frac{\sum_{i=1}^m\int (\lambda_1(\Omega^+) u_i	-g_i(x,u))\phi_1(\Omega^+)}
	{\sum_{i=1}^m\int_{\Omega^+} u_i^{q-1}\phi_1(\Omega^+) }.
\end{align}
 Here $\bar{\phi}_1:=(\phi_1(\Omega^+),\ldots,\phi_1(\Omega^+))$. 
Consider  
$$
h(x, u):=\frac{\sum_{i=1}^m( \lambda_1(\Omega^+)u_i -g_i(x,u))}{\sum_{i=1}^mu_i^{q-1}}, ~~u \in (\mathbb{R}^+)^m, ~~x \in \Omega^+.
$$
By $(g_4)$, $\sup_{x\in \Omega^+}h(x, u) \to -\infty$~as~$|u| \to +\infty$, 
and by $(g_1)$, 
  $\sup_{x\in \Omega^+}h(x, u) \downarrow  0$  as $|u| \to 0$. Hence, $\Lambda:=\sup_{x\in \Omega^+}\max_{u\in (\mathbb{R}^+)^m}h(x,u)>0$. This by  \eqref{LargL} yields  $\lambda^{*}_S<\Lambda<+\infty$.

\end{proof}

From Lemma \ref{cor444} it follows that \eqref{p} has a weak  solution $u_\lambda \in \mathcal{W}^+_S$ for any  $\lambda\in (0,\bar{\lambda})$.  This yields that
$\inf_{v\in \Sigma(u)}\mathcal{R}(u_\lambda, v)=\lambda$, $\forall  \lambda \in (0, \bar{\lambda})$,  and thus,  $\lambda^{*}_S>\lambda>0$. Hence, we have proved that  $0<\lambda^{*}_S<+\infty$.

Proofs of assertions $(1^o)$ and $(2^o)$ of Theorem \ref{thmM} follow from  $(1^o)$, $(2^o)$ of Lemma \ref{lemM}, respectively.

By $(2^o)$ of Lemma \ref{lemM}  the principal eigenvalue $\delta(u_{\lambda^{*}_S})$ of $\Phi_{uu}(u_{\lambda^{*}_S}, {\lambda^{*}_S})$ is non-negative.
Suppose, contrary to our claim, that $\delta(u_{\lambda^{*}_S})>0$. Then  $\Phi_{uu}(u_{\lambda^{*}_S}, {\lambda^{*}_S}):\mathcal{W} \to  \mathcal{W}'$ is nonsingular linear operator, and  so it is $\Phi_{uu}(u_{\lambda^{*}_S}, {\lambda^{*}_S}):X \to  \mathcal{W}'$, where 
\begin{align*}
	X=\{u \in (C^{1}(\overline{\Omega}))^m:~\|u\|_X=\|u\|_{(C^{1}(\overline{\Omega}))^m}<\infty\}. 
\end{align*}
 From \eqref{welldef} and $(g_1)$ it follows that there exist neighbourhoods $U$ of $u_{\lambda^{*}_S}$ in $X$ and $V$ of $\lambda^{*}_S$ in $\mathbb{R}$ such that  $\Phi_{uu}(u,\lambda) \in C(V\times U; \mathcal{L}(X,\mathcal{W}'))$, where $\mathcal{L}(X,\mathcal{W}'))$ denotes the Banach space of bounded linear operators
from $X$ into  $\mathcal{W}'$. Hence, by Implicit Functional Theorem (see, e.g, \cite{kielh}) there is a neighbourhood $V_1\times U_1 \subset V\times U$  of $(\lambda^{*}_S, u_{\lambda^{*}_S})$ in $\mathbb{R}\times X$ and a mapping $V_1 \ni \lambda \mapsto u_\lambda \in U_1$ 
such that $u_\lambda|_{\lambda=\lambda^{*}_S}=u_{\lambda^{*}_S}$ and $\Phi_{u}(u_{\lambda}, \lambda)=0$, $\forall \lambda \in V_1$. 
Furthermore, the map $u_{(\cdot)}: V_1 \to X$ is continuous. Since $\delta(u_{\lambda^{*}_S})>0$, this implies that there is a neighbourhood $V_2 \subset V_1$ of $\lambda^{*}_S$ such that $\delta(u_\lambda)>0$ for every $\lambda \in V_2$. However, this contradicts assertion $(2^o)$ of Theorem \ref{thmM}, and thus, we get that $\delta(u_{\lambda^{*}_S})=0$.  This conclude the proof of Theorem \ref{thmM}.

\section{Appendix: Ekeland's principal for  smooth functionals}\label{sec:appendix}
Let $W$ be a Banach space and  $V\subset W$ be an open  domain. Denote  $B_r:=\{\phi \in W: ~~\|\phi\|_{W}\leq r\}$, $r>0$.

Assume that $F: V \to \mathbb{R}$, $F \in C^2(V)$. 
Consider
\begin{equation}\label{ApMin}
	\hat{F}=\inf_{v \in V}F(v).
\end{equation}

\begin{thm}\label{thm:Ek}
Assume that $|	\hat{F}|<+\infty$. Suppose that there exist $\tau_0, a_0, C_0\in (0,+\infty)$, and a minimizing sequence $(v_k) \subset V$ of \eqref{ApMin} such that  
\begin{align}
	&\|F_{vv}(v_k+\tau \phi)\|_{(W\times W)'} < 	\frac{C_0}{(1-|\tau| a_0)^2}<+\infty,\label{DDRstG}\\
&v_k+\tau \phi \in V,~~ \forall \tau \in (-\tau_0,\tau_0), ~~\forall \phi \in B_1,~~\forall k=1,\ldots.\label{DDRstG2}
\end{align}
 Then 
$$
\|F_{v}(v_k)\|_{W'}	:=\sup_{\xi \in W\setminus 0}\frac{|F_{v}(v_k)(\xi)|}{\|\xi\|_W} \to 0~~~\mbox{as}~~~ k \to +\infty.
$$
\end{thm}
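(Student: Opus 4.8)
The plan is to prove this as a ``smooth'' variant of Ekeland's variational principle, where the smoothness plus the quantitative bound \eqref{DDRstG} lets us upgrade the usual first-order perturbative estimate into genuine control of $\|F_v(v_k)\|_{W'}$. First I would apply the classical Ekeland variational principle to the minimizing sequence: for each $k$ pick $\epsilon_k \downarrow 0$ with $F(v_k) \le \hat F + \epsilon_k$, and obtain a point $\tilde v_k \in V$ with $F(\tilde v_k) \le F(v_k)$, $\|\tilde v_k - v_k\|_W \le \sqrt{\epsilon_k}$, and the crucial inequality
\begin{equation*}
F(w) \ge F(\tilde v_k) - \sqrt{\epsilon_k}\,\|w - \tilde v_k\|_W \qquad \text{for all } w \in V.
\end{equation*}
Here one should be slightly careful, since $V$ is only an open domain, not a complete metric space; but since $\|\tilde v_k - v_k\|_W \le \sqrt{\epsilon_k} \to 0$ and, by \eqref{DDRstG2}, a full $\tau_0$-ball around each $v_k$ sits inside $V$, for $k$ large the relevant balls around $\tilde v_k$ lie in $V$, so Ekeland's principle can be applied on a closed ball contained in $V$ (which is complete). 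Replacing $(v_k)$ by $(\tilde v_k)$ and relabelling, we may assume the above displayed inequality holds with the original $v_k$, at the cost of shrinking $\tau_0$ slightly; note \eqref{DDRstG}–\eqref{DDRstG2} persist for the perturbed sequence since $\|\tilde v_k - v_k\|_W \to 0$.

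Next, fix $\phi \in B_1$ and $t \in (0,\tau_0)$, and test the Ekeland inequality with $w = v_k - t\phi \in V$ (legitimate by \eqref{DDRstG2}). This gives
\begin{equation*}
F(v_k - t\phi) - F(v_k) \ge -\sqrt{\epsilon_k}\, t.
\end{equation*}
On the other hand, Taylor's formula with the integral (or Lagrange) remainder, valid because $F \in C^2(V)$ and the whole segment $[v_k - t\phi, v_k]$ lies in $V$, yields
\begin{equation*}
F(v_k - t\phi) - F(v_k) = -t\, F_v(v_k)(\phi) + \frac{t^2}{2} F_{vv}(v_k - \theta t \phi)(\phi,\phi)
\end{equation*}
for some $\theta = \theta(k,\phi,t) \in (0,1)$. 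Combining the two displays and using the bound \eqref{DDRstG} with $\tau = -\theta t$, for which $|\tau| < t < \tau_0$, we get
\begin{equation*}
-t\, F_v(v_k)(\phi) + \frac{t^2}{2}\cdot\frac{C_0}{(1 - t a_0)^2} \ge -\sqrt{\epsilon_k}\, t,
\end{equation*}
hence, dividing by $t>0$,
\begin{equation*}
F_v(v_k)(\phi) \le \sqrt{\epsilon_k} + \frac{t}{2}\cdot\frac{C_0}{(1 - t a_0)^2}.
\end{equation*}
Since $\phi \in B_1$ was arbitrary, replacing $\phi$ by $-\phi$ gives the two-sided bound $|F_v(v_k)(\phi)| \le \sqrt{\epsilon_k} + \tfrac{t}{2} C_0 (1 - t a_0)^{-2}$ for every $\phi \in B_1$, i.e. $\|F_v(v_k)\|_{W'} \le \sqrt{\epsilon_k} + \tfrac{t}{2} C_0 (1 - t a_0)^{-2}$.

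Finally I would let $k \to \infty$ and then $t \to 0^+$: for any fixed $t \in (0, 1/(2a_0))$, say, $\limsup_{k\to\infty} \|F_v(v_k)\|_{W'} \le \tfrac{t}{2} C_0 (1 - t a_0)^{-2}$, and since $t$ is arbitrary the right-hand side can be made as small as we like, forcing $\|F_v(v_k)\|_{W'} \to 0$. The main obstacle I anticipate is purely the domain issue in the first paragraph: Ekeland's principle requires a complete metric space, while $V$ is merely open, so one must carefully exploit \eqref{DDRstG2} (the uniform $\tau_0$-ball condition) to localize the argument to a closed ball inside $V$ and to justify that the Ekeland point $\tilde v_k$ does not escape $V$ and still satisfies the hypotheses; the Taylor-expansion step and the final limit are routine once the bound \eqref{DDRstG} is available along the whole perturbation segment.
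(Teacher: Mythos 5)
Your route is genuinely different from the paper's, and it has a real gap. The paper argues by direct contradiction: assume $\|F_v(v_k)\|_{W'}>\alpha>0$ along a subsequence, pick $\phi_k\in B_1$ witnessing this, Taylor-expand $F(v_k+\tau\phi_k)$ in the direction $\tau\phi_k$ with $\tau<0$ of the appropriate sign, and use the uniform Hessian bound \eqref{DDRstG} to show $F(v_k+\tau\phi_k)<\hat F$ for $k$ large, contradicting that $\hat F$ is the infimum. No Ekeland, no perturbed sequence, and the conclusion is obtained directly for the given $(v_k)$.

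Your Ekeland detour introduces a problem that your write-up does not close. After replacing $(v_k)$ by the Ekeland points $(\tilde v_k)$ and ``relabelling,'' the subsequent computation establishes $\|F_v(\tilde v_k)\|_{W'}\to 0$, not the stated conclusion $\|F_v(v_k)\|_{W'}\to 0$ for the original sequence. The theorem explicitly asserts the latter: the hypotheses concern a particular minimizing sequence and the conclusion is about that same sequence, so one cannot silently swap it out. The gap is fixable: since $\|\tilde v_k-v_k\|_W\le\sqrt{\epsilon_k}\to 0$ and, for $k$ large, the segment $[\tilde v_k,v_k]$ lies in the set $\{v_k+\tau\phi:\ |\tau|<\tau_0,\ \phi\in B_1\}$ where \eqref{DDRstG} bounds $\|F_{vv}\|$, the mean-value estimate gives $\|F_v(v_k)-F_v(\tilde v_k)\|_{W'}\le C\|v_k-\tilde v_k\|_W\to 0$, which transfers the conclusion back. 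But this step is missing, and without it the argument proves a weaker statement. A second, more minor point: the Ekeland inequality you obtain on a closed ball $\bar B\subset V$ is only valid for $w\in\bar B$, so when you test it with $w=\tilde v_k - t\phi$ you must also verify $w\in\bar B$ (which forces $t$ to be bounded in terms of the radius and $\sqrt{\epsilon_k}$), not merely $w\in V$ as you assert. Both issues vanish in the paper's proof, which never leaves the original sequence and never invokes Ekeland, making the direct contradiction argument both shorter and logically tighter.
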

\begin{proof} 
Suppose, contrary to our claim, that there exists $\alpha>0$ such that
$$
\|F_{v}(v_k)\|_{W'}>\alpha,~~~\forall k=1,\ldots. 
$$
Then for every $k=1,\ldots$, there exists $\phi_k \in V$, $\|\phi_k\|_{W}=1$ such that
$|F_{v}(v_k)(\phi_k)|>\alpha$. 
By the Taylor expansion  
$$
F(v_k +\tau \phi_k)=F(v_k)+\tau F_{v}(v_k)(\phi_k)+\frac{\tau^2}{2}F_{vv}(v_k+\theta_k \tau \phi_k)(\phi_k,\phi_k),
$$
for sufficiently small $|\tau|$, and some $\theta_k \in (0,1)$, $k=1,\ldots$. Suppose, for definiteness, that  $F_{v}(v_k)(\phi_k)>\alpha$. Then for $\tau\in (-\tau_0, 0)$, by \eqref{DDRstG}
\begin{align*}
	F(v_k +\tau \phi_k)\leq F(v_k)+\tau \alpha +\frac{\tau^2}{2}\frac{C_0}{(1+\tau a_0)^2}, ~~~k=1,\ldots. 
\end{align*}
It is easily seen that there exists $\tau_{1}\in (0,\tau_0)$  such that  
$$
\kappa(\tau):=\tau \left( \alpha +\frac{\tau}{2}\frac{C_0}{(1+\tau a_0)^2}\right) <0,~~~~\forall \tau \in (-\tau_{1}, 0).
$$
Since $(v_k)$ is a minimizing sequence, for any $\varepsilon>0$ there exists $k(\varepsilon)$ such that 
$$
F(v_k)<\hat{F}+\varepsilon,~~\forall k>k(\varepsilon).
$$
Take $\tau \in (-\tau_1, 0)$ and  $\varepsilon_0=-\kappa(\tau)/2$. Then by above, $\forall k>k(\varepsilon_0)$   we get
$$
F(v_k +\tau \phi_k)<\hat{F}+\varepsilon_0+\kappa(\tau) = \hat{F}+\kappa(\tau)/2<\hat{F},
$$
and thus, by \eqref{DDRstG2}, a contradiction.
\end{proof}

\end{document}